\toks@\expandafter{\@listI}
\edef\@listI{\the\toks@\setlength{\leftmargin}{15pt}}
\definecolor{dblue}{rgb}{0,0,0.7}
\newtheoremstyle{mythm}{10pt}{10pt}{\color{dblue}}{}{\bf\color{dblue}}{.}{ }{}
\theoremstyle{mythm}
\newtheorem{thm}{Theorem}[section]
\newtheorem{lm}[thm]{Lemma}
\newtheorem{pr}[thm]{Proposition}
\theoremstyle{definition}
\newtheorem{defn}[thm]{Definition}
\newtheorem{exmp}[thm]{Example}
\theoremstyle{remark}
\newtheorem{rmk}[thm]{Remark}
\newcommand\GSp{\mathrm{GSp}}
\newcommand{\Jac}{\mathrm{Jac}}
\newcommand{\End}{\mathrm{End}}
\newcommand{\legendre}[2]{\left(\frac{#1}{#2}\right)}
\newcommand{\Q}{\mathbb Q}
\newcommand{\Z}{\mathbb Z}
\newcommand{\F}{\mathbb F}
\newcommand{\Spec}{\mathrm{Spec}}
\title{Large Galois images for Jacobian varieties of genus $3$ curves}
\author{Sara Arias-de-Reyna, C\'ecile Armana, Valentijn Karemaker,\\ Marusia Rebolledo, Lara Thomas and N\'uria Vila}
\date{}
\begin{document}

\maketitle

\begin{abstract}
Given a prime number $\ell \geq 5$, we construct an infinite family of three-dimensional 
abelian varieties over $\mathbb{Q}$ such that, for any $A/\mathbb{Q}$ in the family, the 
Galois representation $\overline{\rho}_{A,\ell} \colon G_{\mathbb{Q}} \to \mathrm{GSp}_6(\mathbb{F}_{\ell})$ 
attached to the  $\ell$-torsion of $A$ is surjective. Any such variety $A$ will be the 
Jacobian of a genus $3$ curve over $\mathbb{Q}$ whose respective reductions at two auxiliary 
primes we prescribe to provide us with generators of $\mathrm{Sp}_6(\mathbb{F}_{\ell})$.
\end{abstract}



\maketitle

\section*{Introduction}

Let $\ell$ be a prime number. This paper is concerned with realisations of the general symplectic group $\mathrm{GSp}_6(\mathbb{F}_{\ell})$ as a Galois
group over $\mathbb{Q}$, arising from the Galois action on the $\ell$-torsion points of
three-dimensional abelian varieties defined over $\mathbb{Q}$.

More precisely, let $g \geq 1$ be an integer. One can exploit the theory of abelian varieties defined over $\mathbb{Q}$ as follows. 
If $A$ is an abelian variety of dimension $g$ defined over $\Q$, let $A[\ell] = A(\overline{\Q})[\ell]$
denote the $\ell$-torsion subgroup of $\overline{\Q}$-points of $A$. The natural action of the absolute Galois
group $G_{\Q}=\text{Gal}(\overline{\Q}/\Q)$ on $A[\ell]$ gives rise to a continuous Galois representation
$\overline{\rho}_{A,\ell}$ taking values in $\text{GL}(A[\ell]) \simeq \text{GL}_{2g}(\F_{\ell})$.
If the abelian variety $A$ is moreover principally polarised, the image of $\overline{\rho}_{A,\ell}$ lies inside the general
symplectic group $\text{GSp}(A[\ell])$ of $A[\ell]$  with respect to the symplectic pairing induced by the Weil
pairing and the polarisation of~$A$; thus, we have a representation
$$\overline{\rho}_{A,\ell} \: : \: G_{\Q} \longrightarrow \text{GSp}(A[\ell]) \simeq \text{GSp}_{2g}(\F_{\ell}),$$
providing a realisation of
$\text{GSp}_{2g}(\F_{\ell})$ as a Galois group over $\Q$ if  $\overline{\rho}_{A,\ell}$ is surjective.

The image of Galois representations attached to the $\ell$-torsion points of abelian varieties has been widely studied. 
For an abelian variety $A$ defined over a number field, the classical result of Serre ensures surjectivity for almost all primes $\ell$ when  $\mathrm{End}_{\overline{\Q}}(A)=\mathbb{Z}$ and the dimension of $A$ is 2, 6 or odd (cf.~\cite{OeuvresSerre}). More recently, Hall \cite{Hall11} 
proves a result for any dimension, 
with the additional condition that $A$ has semistable reduction of toric dimension 1 at some prime. 
This result has been further generalised to the case of abelian varieties over finitely generated fields 
(cf.~\cite{AGP}).

We can use Galois representations attached to the torsion points
of abelian varieties defined over $\mathbb{Q}$
to address  the Inverse Galois Problem and its
variations involving ramification conditions. For example, the Tame Inverse Galois Problem,
proposed by Birch, asks if, given a finite group $G$,
there exists a tamely ramified Galois extension $K/\mathbb{Q}$ with Galois group isomorphic
to $G$.
Arias-de-Reyna and Vila solved the Tame Inverse Galois problem for $\mathrm{GSp}_{2g}(\mathbb{F}_{\ell})$
when $g=1, 2$ and $\ell
\geq 5$ is any prime number, by constructing a family of genus $g$ curves $C$ such that
the Galois representation $\overline{\rho}_{\mathrm{Jac}(C), \ell}$ attached
to the Jacobian variety $\mathrm{Jac}(C)$ is surjective and tamely ramified for every curve in the family (cf. \cite{SaraNuria09}, \cite{SaraNuria11}).  For both $g=1$ and $g=2$,
the strategy entails determining a set of local conditions at auxiliary primes,
(that is to say, prescribing a finite list of congruences  that the defining equation of $C$ should satisfy)
which ensure the surjectivity of $\overline{\rho}_{\mathrm{Jac}(C), \ell}$, and a careful study of
the ramification at $\ell$ in particularly favourable situations.

In fact, the strategy of ensuring surjectivity of the Galois representation attached to the
$\ell$-torsion of an abelian variety by prescribing local conditions at auxiliary primes
works in great generality.
Given a $g$-dimensional principally polarised abelian variety $A$ over $\mathbb{Q}$,
such that the Galois representation $\overline{\rho}_{A, \ell}$ is surjective,
it is always possible to find some auxiliary primes $p$ and $q$ depending on $\ell$ such that any abelian variety $B$
defined over $\mathbb{Q}$ which is ``close enough'' to $A$ with respect to the primes $p$ and $q$
(in a sense that can be made precise in terms of $p$-adic, resp.~$q$-adic, neighbourhoods  in moduli spaces of principally
polarised $g$-dimensional abelian varieties with full
level structure) also has a surjective $\ell$-torsion
Galois representation $\overline{\rho}_{B,\ell}$.
This is a consequence of Kisin's results on local constancy in $p$-families of Galois
representations; the reader can find a detailed explanation of this aspect in \cite[Section 4.2]{AK13}.

\bigskip

In this paper we focus on the case $g=3$.
Our aim is to find auxiliary primes $p$ and $q$
(depending on $\ell$), and explicit congruence conditions
on polynomials defining genus~$3$ curves, which ensure that any curve $C$, defined by an equation over $\mathbb{Z}$ satisfying
these congruences, will have the property that
the image of $\overline{\rho}_{\mathrm{Jac}(C), \ell}$ coincides with $\mathrm{GSp}_{6}(\mathbb{F}_{\ell})$.
In this way we obtain many distinct realisations of $\mathrm{GSp}_6(\mathbb{F}_{\ell})$ as
a Galois group over $\mathbb{Q}$.

To state our main result, we introduce the following notation: we will say that a polynomial $f(x, y)$ in two variables  is of \emph{3-hyperelliptic
type} if it is of the form $f(x, y)=y^2-g(x)$, where $g(x)$ is a polynomial of degree $7$ or $8$ and of
\emph{quartic type} if the total degree of $f(x, y)$ is $4$.

\begin{thm}\label{thm:main} Let $\ell\geq 13$ be a prime number. For all odd distinct prime numbers $p,q\neq \ell$,  
with $q>1.82\ell^2$, there exist  $f_p(x, y), f_q(x, y)\in\Z[x,y]$ of the same type ($3$-hyperelliptic or quartic), 
such that for any $f(x, y)\in\Z[x,y]$ of the same type as $f_p(x, y)$ and $f_q(x, y)$ and satisfying
\begin{equation*}f(x, y)\equiv f_q(x, y)\pmod{q} \quad \text{ and }\quad  f(x, y)\equiv f_p(x, y)\pmod{p^3},
\end{equation*}
 the image of the Galois representation $\overline{\rho}_{\mathrm{Jac}(C), \ell}$ attached to the $\ell$-torsion points of the Jacobian of the projective genus~$3$ curve $C$ 
 defined over $\Q$ by the equation $f(x,y)=0$ is $\mathrm{GSp}_6(\mathbb{F}_{\ell})$.

Moreover, for $\ell\in\{5,7,11\}$ there exists a prime number $q\neq \ell$ for which the same statement holds for each odd prime number $p\neq q,\ell$.
\end{thm}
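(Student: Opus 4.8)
The plan is to realise $\mathrm{GSp}_6(\mathbb F_\ell)$ by exhibiting, inside the image of $\overline\rho_{\mathrm{Jac}(C),\ell}$, enough elements to generate the whole group, using the two auxiliary primes $p$ and $q$ to control two different local contributions. The overall strategy follows the template of Arias-de-Reyna--Vila for $g=1,2$: first isolate a group-theoretic criterion guaranteeing that a subgroup $G\le\mathrm{GSp}_6(\mathbb F_\ell)$ with surjective multiplier must be all of $\mathrm{GSp}_6(\mathbb F_\ell)$, then arrange via local conditions that the image contains the required generating elements. Concretely, I expect to need (i) a \emph{transvection} in the image, which forces the image to contain $\mathrm{Sp}_6(\mathbb F_\ell)$ once the action on $\mathbb F_\ell^6$ is irreducible and the subgroup is not contained in an imprimitive or field-extension subgroup; and (ii) an element certifying irreducibility/primitivity, for instance one whose characteristic polynomial is irreducible over $\mathbb F_\ell$ or has suitable Frobenius-torus behaviour. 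Multiplier surjectivity is automatic because $\det\overline\rho_{\mathrm{Jac}(C),\ell}$ is the mod-$\ell$ cyclotomic character.

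Next I would match these two requirements to the two primes. At the prime $p$, the plan is to force $\mathrm{Jac}(C)$ to have semistable reduction of toric dimension $1$: choosing $f_p(x,y)$ so that the curve $C$ acquires exactly one node over $\mathbb F_p$ makes $\mathrm{Jac}(C)$ an extension of a $2$-dimensional abelian variety by $\mathbb G_m$, and the inertia at $p$ then acts on $\mathrm{Jac}(C)[\ell]$ through a single transvection (this is exactly the mechanism behind Hall's theorem cited in the introduction). Prescribing $f\equiv f_p\pmod{p^3}$ is what pins down the reduction type precisely enough — the cube is needed so that the deformation does not smooth out the node or change the toric rank, and one checks this via the explicit description of the special fibre and its regular model. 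At the prime $q$, the plan is to choose $f_q(x,y)$ so that the reduction $C_q$ is smooth and the Frobenius at $q$ acts on $\mathrm{Jac}(C_q)[\ell]$ with an element of $\mathrm{GSp}_6(\mathbb F_\ell)$ whose characteristic polynomial is irreducible mod $\ell$ (equivalently, whose order in the appropriate maximal torus is large); this rules out the image being reducible, imprimitive, or lying in a classical subgroup, and it also excludes the exceptional subgroups. The bound $q>1.82\ell^2$ is precisely the Weil-bound-type estimate needed to guarantee, by counting points or by a density argument over genus-$3$ curves over $\mathbb F_q$, that such an $f_q$ exists with the prescribed Frobenius behaviour — for small $\ell\in\{5,7,11\}$ the clean quadratic bound may fail and one instead exhibits a single suitable $q$ by an explicit search, which is the content of the ``moreover'' clause.

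With both local inputs in hand, the deduction is group-theoretic: the image $G$ is a subgroup of $\mathrm{GSp}_6(\mathbb F_\ell)$ containing a transvection (from $p$) and an element forcing irreducible primitive action without a field-extension or tensor-decomposition structure (from $q$); by the classification of subgroups of $\mathrm{GSp}_6(\mathbb F_\ell)$ generated by transvections — a Kantor-type / Zalesskii--Serezhkin result — any such $G$ with surjective multiplier equals $\mathrm{GSp}_6(\mathbb F_\ell)$. Finally, local constancy of $\ell$-adic Galois representations in $p$-adic families (Kisin, as recalled in the introduction and in \cite{AK13}) shows that the conditions $f\equiv f_q\pmod q$ and $f\equiv f_p\pmod{p^3}$ cut out $q$-adic and $p$-adic neighbourhoods on which the reduction types at $p$ and $q$, hence the relevant local images, are unchanged; so every $f$ satisfying the two congruences yields a curve with the same conclusion. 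The main obstacle I anticipate is the $q$-side existence statement: producing $f_q$ of $3$-hyperelliptic or quartic type over $\mathbb F_q$ whose Jacobian has a Frobenius element of the required (irreducible, large-order) type, while keeping the curve smooth of genus $3$, and making the counting/estimation explicit enough to yield the clean threshold $q>1.82\ell^2$ — together with handling by hand the finitely many small $\ell$ where this threshold is not yet effective.
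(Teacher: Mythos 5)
Your high-level strategy is indeed the one the paper follows (and packages into the cited explicit criterion, Theorem 3.10 of \cite{AAKRTV14}): a transvection forced at $p$ by stable reduction with a single node and toric rank $1$, an element at $q$ whose characteristic polynomial behaves well mod $\ell$, and the classification of subgroups of $\GSp_6(\F_\ell)$ containing a transvection. But the step you defer as ``the main obstacle'' --- producing $f_q$ at all --- is precisely the core content of the paper, and your suggested route (``counting points or a density argument over genus-$3$ curves over $\F_q$'') is not developed and is not what makes the threshold $q>1.82\ell^2$ appear. The paper instead works on the side of Weil polynomials: Haloui's explicit inequalities show that once $q>1.82\ell^2$ \emph{every} degree-$6$ polynomial of the symmetric shape with coefficients of absolute value $<\ell$ is a Weil $q$-polynomial; a Stickelberger/Legendre-symbol count then shows there are strictly more such polynomials with non-square discriminant mod $\ell$ (and $a,c\neq 0$) than reducible-mod-$\ell$ ones, giving an ordinary Weil polynomial irreducible mod $\ell$ with non-zero trace. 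Honda--Tate converts this into an ordinary simple isogeny class, a Howe--Zhu-type argument (using $a\neq 0$) upgrades simple to absolutely simple, Howe supplies a principally polarised member, Oort--Ueno plus absolute simplicity identifies it as the Jacobian of a smooth irreducible curve over $\overline{\F}_q$, and Serre's Torelli-twist descent brings the curve down to $\F_q$ at the cost of possibly replacing $P_q(X)$ by $P_q(-X)$. A density/equidistribution argument over the moduli of genus-$3$ curves would need effective monodromy and Chebotarev input with far worse dependence on $\ell$, and would not yield the clean quadratic bound; so as it stands the existence of $f_q$ is a genuine gap in your proposal.

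Two further points. First, irreducibility of the Frobenius characteristic polynomial mod $\ell$ does \emph{not} by itself rule out an imprimitive image: an element permuting the summands of a direct-sum decomposition transitively can still have irreducible characteristic polynomial, and what excludes this case in the criterion actually used is the condition that the trace $a$ is non-zero mod $\ell$ (which is why Theorem~\ref{A} and Proposition~\ref{irredmodell} insist on $a\not\equiv 0\pmod\ell$); you also need $\ell$ coprime to the order of the component group at $p$, which the paper pins down as exactly $2$ by computing the thickness of the node --- this is where the mod-$p^3$ condition on $f(0,0)$ enters, a point your sketch does not address. Second, the appeal to Kisin's local constancy is unnecessary and slightly off target: the congruence mod $q$ literally fixes the special fibre at $q$ (good reduction with the prescribed Weil polynomial), and the congruence mod $p^2$ together with the $p^3$-condition on the constant term is analysed directly through the stable and minimal regular models; the whole point of the theorem is to replace the ineffective neighbourhoods coming from local constancy by these explicit exponents.
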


In Section \ref{sec:4} we state and prove a refinement of this Theorem (cf.~Theorem \ref{thm:refined}).
In fact, we have a very explicit control of the polynomial $f_p(x, y)$.
In general we can say little about $f_q(x, y)$,
but for any fixed $\ell\geq 13$ and any fixed $q \geq1.82 \ell^2$ we can find suitable polynomials $f_q(x, y)$
by an exhaustive search as follows: there exist only finitely many polynomials $\bar{f}_q(x, y)\in\mathbb{F}_q[x, y]$
of $3$-hyperelliptic or quartic type with non-zero discriminant. For each of these, we can compute the characteristic polynomial of the action of the Frobenius endomorphism on the Jacobian
of the curve defined by $\bar{f}_q(x, y)=0$ by counting the $\mathbb{F}_{q^r}$-points of this curve, for $r=1, 2, 3$, and check whether this polynomial
is an ordinary $q$-Weil polynomial with  non-zero middle coefficient, non-zero trace modulo $\ell$, and which is irreducible modulo $\ell$.
Proposition \ref{irredmodell} ensures that the search will terminate. Then,  any lift of $\bar{f}_q(x, y)$, of the same type, gives us a suitable polynomial $f_q(x, y)\in \mathbb{Z}[x, y]$. In Example \ref{ex:mainthm} we present some concrete examples obtained using \textsc{Sage} and \textsc{Magma}.

Note that the above result constitutes an explicit version of Proposition 4.6 of \cite{AK13} in the case of
principally polarised $3$-dimensional abelian varieties. We can explicitly give the size of the neighbourhoods 
where surjectivity of $\overline{\rho}_{A, \ell}$ is preserved; in other words, we can give the powers of the 
auxiliary primes $p$ and $q$ such that any other curve defined by congruence conditions modulo these powers 
gives rise to a Jacobian variety with surjective $\ell$-torsion representation.

\bigskip

The proof of Theorem \ref{thm:main} is based on two main pillars: the classification of subgroups of $\mathrm{GSp}_{2g}(\mathbb{F}_{\ell})$
containing a non-trivial transvection, and the fact that one can force the image of $\overline{\rho}_{A, \ell}$ to contain a
non-trivial transvection by
imposing  a specific type of ramification
at an auxiliary prime. This strategy goes back to Le Duff \cite{LeDuff98} in the case of Jacobians
of genus $2$ hyperelliptic curves, and has been extended to the general case by Hall in \cite{Hall11}, where he obtains a
surjectivity result for $\overline{\rho}_{A, \ell}$ for almost all primes $\ell$.

We already followed this strategy in \cite{AAKRTV14} to
formulate an explicit surjectivity result for $g$-dimensional
abelian varieties (see Theorem 3.10 of loc.~cit.):
let $A$ be a principally polarised $g$-dimensional abelian variety defined over $\mathbb{Q}$,
such that the reduction of the N\'eron model  of $A$ at some prime $p$ is semistable with toric rank 1, and
the  Frobenius endomorphism at some prime $q$ of good reduction for $A$ acts irreducibly and with trace $a\not=0$ on the
reduction of the N\'eron model of $A$ at $q$. We proved that  for each prime number $\ell\nmid 6pqa$, coprime with the order 
of the component group of 
the N\'eron model of $A$ at $p$, and such that the characteristic polynomial of the  Frobenius endomorphism at $q$ is irreducible mod $\ell$, then the representation $\overline{\rho}_{A, \ell}$ is surjective.

 Section~\ref{sectionone} collects some notations and tools
that we will use in the rest of the paper. In Section~\ref{sec:2}
we address the condition of semistable reduction of toric rank  $1$ at a prime $p$; we obtain a congruence condition modulo $p^3$ (cf.~Proposition~\ref{jacobianthm}).

In Section~\ref{sec:3} we give conditions ensuring that the reduction
of the N\'eron model of a Jacobian variety $A=\mathrm{Jac}(C)$ at a prime $q$ is an absolutely simple abelian variety over $\mathbb{F}_q$ such that the characteristic polynomial
of the Frobenius endomorphism at $q$ is irreducible and has non-zero trace modulo $\ell$ (cf.~Theorem~\ref{A}). We  make use
of Honda-Tate Theory in the ordinary case, which relates so-called ordinary
Weil polynomials to isogeny classes of ordinary abelian varieties  defined
over finite fields of characteristic $q$. First, we need to prove the existence of a
suitable prime $q$ and a suitable ordinary Weil polynomial; this is the content of
Proposition~\ref{irredmodell}, whose proof is postponed to Section~\ref{sectionirred}.
This polynomial provides us  with an abelian variety $A_q$
defined over $\mathbb{F}_q$; any abelian variety $A$
such that the reduction of the N\'eron model of $A$ at $q$ coincides with $A_q$
will satisfy the desired condition at $q$.
At this point we
use the fact that each principally polarised $3$-dimensional abelian variety over $\mathbb{F}_q$ is the Jacobian of
a genus $3$ curve, which can be defined over $\mathbb{F}_q$ up to a quadratic twist.

Once we have established congruence conditions at auxiliary primes $p$ and $q$, we need
to check that any curve $C$ over $\mathbb{Z}$ whose defining equation
satisfies these conditions will provide us with a Galois representation $\overline{\rho}_{\mathrm{Jac}(C), \ell}$
whose image is $\mathrm{GSp}_{6}(\mathbb{F}_{\ell})$. This is carried out in  Section \ref{sec:4}.

\medskip

David Zywina communicated to us that he has recently and independently developed a
method for studying the image of Galois representations $\overline{\rho}_{\mathrm{Jac}(C), \ell}$
attached to the Jacobians of genus $3$ plane quartic curves $C$, for a large class of
such curves (cf.~\cite{Zywina15}). In particular, for each prime $\ell$, he obtains a realisation
of $\GSp_6(\mathbb{F}_{\ell})$ as a Galois group over $\mathbb{Q}$. Samuele Anni, Pedro Lemos and 
Samir Siksek also worked independently on this topic. In their paper \cite{ALS15}, they study
semistable abelian varieties and provide an example of a hyperelliptic genus $3$ curve $C$ such 
that $\mathrm{Im}\overline{\rho}_{\mathrm{Jac}(C), \ell}=\GSp_6(\mathbb{F}_{\ell})$ for all $\ell\geq 3$.
Both Zywina and Anni et al.~propose a method which, given a fixed genus $3$ curve $C$ satisfying suitable
conditions, returns a finite list of primes such that the corresponding representation 
$\overline{\rho}_{\mathrm{Jac}(C), \ell}$
is surjective for any $\ell$ outside the list, generalising the approach of \cite{Dieulefait2002} for the
case of genus $2$ to genus $3$. Both methods rely on Hall's surjectivity result  \cite{Hall11} for the image of Galois representations attached to the torsion points of
abelian varieties as the main technical tool.
In our paper, however, we fix a prime $\ell\geq 5$ and give congruence conditions such that, 
for any genus $3$ curve $C$ satisfying them, we can ensure surjectivity of the attached Galois
representation $\overline{\rho}_{\mathrm{Jac}(C), \ell}$. We also borrow some ideas from Hall's paper \cite{Hall11}, 
although formally we do not make use of his results.

\section{Geometric preliminaries}\label{sectionone}

In this section we recall some background from algebraic geometry and fix some notations.

\subsection{Hyperelliptic curves and curves of genus~3}\label{subsec:notation}

A smooth geometrically connected projective curve\footnote{In this article, we will say that a \emph{curve over a field $K$} 
is an algebraic variety  over $K$ whose irreducible components are of dimension~$1$. (In particular, a curve can be singular.)} $C$ 
of genus $g\geq 1$  over a field $K$ is \emph{hyperelliptic} 
if there exists a degree $2$ finite separable morphism from $C_{\overline K} = C\times_{K} \overline K$ to 
$\mathbb P^1_{\overline K}$.  If~$K$ is algebraically closed or a finite field, then such a curve $C$ has  
a \emph{hyperelliptic equation} defined over $K$\footnote{When $K$ is not algebraically closed nor a finite 
field, the situation can be more complicated (cf.~\cite[Section~4.1]{lercier_ritzenthaler}).}. That is to say, 
the function field of $C$ is $K(x)[y]$ under the relation $y^2+h(x)y=g(x)$ with $g(x), h(x)\in K[x]$, $\deg(g(x))\in \{2g+1, 2g+2\},$ and $\deg(h(x))\leq g$. Moreover, if  $\mathrm{char}(K)\neq 2$, we can take $h(x)=0$. Indeed, in that case,
the conic  defined as the quotient of $C$ by the group generated by the hyperelliptic involution has 
a $K$-rational point, hence is isomorphic to $\mathbb P^1_{K}$ (see e.g.~\cite[Section~1.3]{lercier_ritzenthaler} for more details).
 The curve $C$ is the union of the two affine open schemes
\begin{equation*} \begin{aligned} U &=\Spec \left(K[x,y]/(y^2+h(x)y-g(x))\right)\quad \text{and}\\   V&=\Spec \left(K[t,w]/(w^2+t^{g+1}h(1/t)y-t^{2g+2}g(1/t))\right)\\\end{aligned}\end{equation*}
 glued along $\Spec(K[x,y,1/x]/(y^2+h(x)y-g(x)))$ via the identifications   $x=1/t, y=t^{-g-1}w$.

If $\mathrm{char}(K)\neq 2$, then any separable polynomial $g(x)\in K[x]$ of degree $2g+1$ or $2g+2$ gives rise to a hyperelliptic curve $C$ of genus $g$ defined over $K$ by glueing the  open affine schemes $U$ and $V$ (with $h(x)=0$) as above. We will say
that   $C$ is \emph{given by the hyperelliptic equation $y^2=g(x)$.} We will also say, as in the introduction, that a polynomial in two variables is of \emph{$g$-hyperelliptic type} if it is of the form $y^2-g(x)$ with $g(x)$ a polynomial of degree $2g+1$ or $2g+2$.

\medskip
In this article, we are especially interested in curves of genus~$3$.
If $C$ is a smooth geometrically connected projective non-hyperelliptic curve of genus $3$ defined over a field $K$, 
then its canonical embedding $C\hookrightarrow \mathbb P_{K}^2$
identifies $C$ with a smooth plane quartic curve defined over $K$.
This means that the curve $C$ has a model over $K$ given by $\mathrm{Proj}(K[X,Y,Z]/F(X,Y,Z))$ where $F(X, Y, Z)$ 
is a degree~$4$ homogeneous polynomial with coefficients in $K$. Conversely, any smooth plane quartic curve is the 
image by a canonical embedding of a non-hyperelliptic curve of genus $3$.
If this curve is  $\mathrm{Proj}(K[X,Y,Z]/F(X,Y,Z))$ where $F(X, Y, Z)$ is the homogenisation of a degree~$4$ polynomial 
$f(x, y)\in K[x,y]$,  we will say 
that $C$ is the  \emph{quartic plane curve defined by the affine equation $f(x,y)=0$}.
We will say, as in the introduction,  that a polynomial in two variables is of \emph{quartic type} if its total degree is $4$.

\subsection{Semistable curves and their generalised Jacobians}\label{subsec:jacobians}

We briefly recall the basic notions we need about semistable and stable curves, give the definition of the intersection graph of a curve and explain the link between this  graph  and the structure of their generalised Jacobian.
The classical references we use  are essentially \cite{Liu06} and \cite{BLR90}. For a nice overview which contains other references, the reader could also consult~\cite{romagny}.

A curve $C$ over a field $k$ is said to be  \emph{semistable} if the curve $C_{\overline k}=C\times_{k} \overline k$ is reduced and has at most  ordinary double points as singularities. It is said to be  \emph{stable} if moreover $C_{\overline k}$ is connected, projective of arithmetic genus $\geq 2$, and if any irreducible component of $C_{\overline k}$ isomorphic to $\mathbb{P}^1_{\overline k}$ intersects the other irreducible components in at least three points.  A proper flat morphism of schemes $\mathcal C\to S$  is said to be \emph{semistable} (resp. \emph{stable}) if  it has semistable (resp. stable) geometric fibres.

\medskip
Let  $R$ be a discrete valuation ring with fraction field $K$ and residue field $k$.
Let $C$ be a  smooth  projective  geometrically connected curve over $K$. A \emph{model} of $C$ over $R$ is a normal scheme $\mathcal C/R$ such that $\mathcal C\times_{R} K\cong C$.
We say that $C$ has \emph{semistable reduction} (resp. \emph{stable reduction}) if $C$ has a model $\mathcal C$ over $R$ which is a semistable (resp. stable) scheme over $R$. If such a stable model exists, it is unique up to isomorphism and we call it \emph{the stable model of $C$ over $R$} (cf.~\cite[Chap.10, Definition 3.27 and Theorem 3.34]{Liu06}).
If the curve $C$ has  genus $g\geq 1$, then it admits  a minimal regular model $\mathcal C_{min} $ over $R$, unique up to unique isomorphism. Moreover, $\mathcal C_{min}$ is semistable if and only if $C$ has semistable reduction, and if $g\geq 2$, this is equivalent to $C$ having stable reduction (cf.~\cite[Chap. 10, Theorem 3.34]{Liu06}, or \cite[Theorem 3.1.1]{romagny} when $R$ is strictly henselian).

\medskip
Assume that $C$ is a smooth projective geometrically connected curve of genus $g\geq 2$ over $K$ with semistable reduction. 
Denote by $\mathcal C$ its stable model over $R$ and by $\mathcal C_{min}$ its minimal regular model over $R$.
We know that the Jacobian variety $J=\Jac(C)$ of $C$ admits a N\'{e}ron model $\mathcal J$ over $R$ and the 
canonical morphism $\mathrm{Pic}^0_{\mathcal C/R}\to \mathcal J^0$ is an isomorphism (cf.~\cite[$\S 9.7$, Corollary 2]{BLR90}).
Note that since $\mathcal C_{min}$ is also semistable, we have  $\mathrm{Pic}^0_{\mathcal C_{min}/R}\cong \mathcal J^0$.
Moreover,
the abelian variety $J$ has semistable reduction, that is to say $\mathcal J^0_{k}\cong \mathrm{Pic}^0_{\mathcal C_{ k}/k}$ is canonically an extension of an abelian variety by a torus $T$. As we will see, the structure of the algebraic group $\mathcal J^0_{ k}$ (by which we mean the toric rank and the order of the component group of its geometric special  fibre)  is related to the  intersection graphs of $\mathcal C_{\overline k}$ and $\mathcal C_{min,\overline k}$.

\medskip
Let $X$ be a curve over $\overline{k}$. Consider the \emph{intersection  graph} (or \emph{dual graph}) $\Gamma(X)$, defined as the graph whose vertices are the irreducible components  of $X$, where two irreducible components $X_i$ and $X_j$ are connected by as many  edges as there are irreducible components in the intersection $X_i\cap X_j$.  In particular, if the curve $X$ is semistable, two components $X_i$ and $X_j$ are connected by one edge if there is a singular point lying on both $X_i$ and $X_j$.  Here $X_i=X_j$ is allowed. The \emph{(intersection) graph without loops}, denoted by $\Gamma'(X)$, is the graph obtained by removing from $\Gamma(X)$ the edges corresponding to~$X_i=X_j$.

Next, we paraphrase \cite[$\S 9.2$, Example 8]{BLR90}, which gives the toric rank in terms of the cohomology of the graph $\Gamma(\mathcal C_{\overline k})$.
\begin{pr}[\cite{BLR90}, $\S 9.2$, Ex.~8]\label{BLRExactSeq}
The N\'{e}ron model $\mathcal J$ of the Jacobian of the curve $\mathcal{C}_k$ has semistable reduction.  More precisely, let $X_1,\ldots, X_r$ be the irreducible components of $\mathcal C_{k}$, and let $\widetilde X_1,\dots, \widetilde X_r$ be their respective normalisations. Then the canonical extension associated to $\mathrm{Pic}^0_{\mathcal C_{k}/ k}$ is given by the exact sequence
\[
1\longrightarrow T\hookrightarrow \mathrm{Pic}^0_{\mathcal C_{k}/k}\xrightarrow{\pi^*}\prod_{i=1}^r \mathrm{Pic}^0_{\widetilde X_i/k}
\longrightarrow 1
\]
where the morphism $\pi^*$
is induced by the morphisms $\pi_i:\widetilde X_i\longrightarrow X_i$.
The rank of the torus~$T$ is equal to the rank of the cohomology group $H^1(\Gamma(\mathcal C_{\overline{k}}),\Z). $
\end{pr}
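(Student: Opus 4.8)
The plan is to reduce to an algebraically closed base field and then carry out the classical normalisation computation for the Picard group of a nodal curve. Using the identification $\mathcal{J}^0_k \cong \mathrm{Pic}^0_{\mathcal{C}_k/k}$ recalled above, and the fact that forming the identity component of the Picard scheme commutes with the field extension $k\hookrightarrow\overline k$, it suffices to analyse $\mathrm{Pic}^0_{X/\overline k}$ for $X := \mathcal{C}_{\overline k}$, a connected reduced proper curve over $\overline k$ all of whose singularities are ordinary double points; the exact sequence produced will be canonical, hence $\Gal(\overline k/k)$-equivariant, and will descend to $k$ at the end. First I would write $\pi\colon \widetilde X\to X$ for the normalisation, so that $\widetilde X$ is the disjoint union of the $r'$ smooth proper connected curves $\widetilde X'_i$ lying over the geometric irreducible components of $X$, and let $\Sigma$ be the finite set of nodes of $X$.

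Next I would exploit the short exact sequence of sheaves of abelian groups on $X$
\[
1 \longrightarrow \mathcal{O}_X^{\times} \longrightarrow \pi_*\mathcal{O}_{\widetilde X}^{\times} \longrightarrow \mathcal{Q} \longrightarrow 1,
\]
where $\mathcal{Q}$ is a skyscraper sheaf supported on $\Sigma$: a local computation at a node, using $\widehat{\mathcal{O}}_{X,x}\cong \overline k[[u,v]]/(uv)$ with normalisation $\overline k[[u]]\times\overline k[[v]]$, identifies the stalk $\mathcal{Q}_x$ with $\mathbb{G}_m$ via the ratio of the leading coefficients along the two branches. Since $\pi$ is finite ($R^{>0}\pi_*=0$) and $\mathcal{Q}$ is a skyscraper ($H^{>0}(X,\mathcal{Q})=0$), taking cohomology collapses the long exact sequence to
\[
1 \to \overline k^{\times} \to (\overline k^{\times})^{r'} \xrightarrow{\ \alpha\ } (\overline k^{\times})^{|\Sigma|} \to \mathrm{Pic}(X) \xrightarrow{\ \pi^*\ } \prod_i \mathrm{Pic}(\widetilde X'_i) \to 1,
\]
so $\ker(\pi^*)\cong\mathrm{coker}(\alpha)$. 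The map $\alpha$ sends a tuple $(a_i)$ of constants to the tuple recording, at each node, the ratio $a_i/a_j$ of the constants on the components carrying its two branches; identifying the geometric components of $X$ with the vertices of $\Gamma(X)=\Gamma(\mathcal{C}_{\overline k})$ and the nodes with its edges (after choosing an orientation), $\alpha$ is precisely $\mathbb{G}_m$ tensored over $\Z$ with the coboundary map $C^0(\Gamma,\Z)\to C^1(\Gamma,\Z)$. Hence $\mathrm{coker}(\alpha)\cong\mathbb{G}_m\otimes_{\Z}H^1(\Gamma(\mathcal{C}_{\overline k}),\Z)$, which, since the cohomology of a graph with $\Z$-coefficients is torsion-free, is a split torus of dimension equal to the rank of $H^1(\Gamma(\mathcal{C}_{\overline k}),\Z)$. (Self-nodes of a component contribute loops to $\Gamma$, hence to $H^1$, and are correctly accounted for here; this is why $\Gamma$ rather than $\Gamma'$ appears.)

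To pass from this computation on $\overline k$-points to the asserted exact sequence of group schemes, I would run the same normalisation argument with the relative Picard functors, obtaining
\[
1 \longrightarrow T \longrightarrow \mathrm{Pic}^0_{X/\overline k} \xrightarrow{\ \pi^*\ } \prod_i \mathrm{Pic}^0_{\widetilde X'_i/\overline k} \longrightarrow 1
\]
with $T=\mathrm{coker}(\alpha)$ a torus and the target an abelian variety; this is then the Chevalley decomposition of $\mathrm{Pic}^0_{X/\overline k}$, so in particular $\mathrm{Pic}^0_{X/\overline k}$ is semi-abelian (equivalently, $J$ has semistable reduction), with toric rank equal to the rank of $H^1(\Gamma(\mathcal{C}_{\overline k}),\Z)$. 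Finally I would descend from $\overline k$ to $k$: the irreducible components $X_1,\dots,X_r$ of $\mathcal{C}_k$, their normalisations $\widetilde X_i$, the maps $\pi_i$, and the torus $T$ are $\Gal(\overline k/k)$-stable, so the exact sequence descends to the one in the statement. The step I expect to demand the most care is matching the unit-cohomology computation with graph cohomology --- fixing an orientation, identifying the coboundary map, and using torsion-freeness of $H^1(\Gamma,\Z)$ --- together with the routine but mildly delicate bookkeeping of moving between $\overline k$-points and group schemes and back down to $k$; no individual step is deep, and the argument is essentially a reprise of \cite[$\S 9.2$, Example 8]{BLR90}.
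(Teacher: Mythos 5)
The paper does not actually prove this proposition: it is quoted (paraphrased) from \cite{BLR90}, $\S 9.2$, Example~8, so there is no internal proof to compare against. Your argument is correct and is essentially the standard one underlying that reference --- the normalisation sequence of unit sheaves $1\to\mathcal O_X^\times\to\pi_*\mathcal O_{\widetilde X}^\times\to\mathcal Q\to 1$, its cohomology, and the identification of $\ker(\pi^*)$ with $H^1(\Gamma(\mathcal C_{\overline k}),\Z)\otimes_{\Z}\mathbb{G}_m$ --- with the only genuinely delicate points (passing from Picard groups to the relative Picard schemes, restricting to identity components, and descending from $\overline k$ to $k$) correctly identified and handled as in loc.~cit.
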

We will use the preceding result in Sections~\ref{sec:2} and \ref{sec:3}. Note that the toric rank does not change if we replace $\mathcal C$ by $\mathcal C_{min}$.

The intersection graph of $\mathcal C_{min,\overline k}$ also determines the order of the component group of the geometric special fibre $\mathcal J_{\overline k}$.
Indeed, the scheme $\mathcal C_{min}\times R^{sh}$, where $R^{sh} $ is the strict henselisation of $R$, fits the hypotheses of \cite[$\S 9.6$, Proposition 10]{BLR90} which gives the order of the component group in terms of the graph of $\mathcal C_{min,\overline k}$; we reproduce it here for the reader's convenience.
\begin{pr}[\cite{BLR90}, $\S 9.6$, Prop.~10]\label{prop:Phi}
Let $X$ be a proper and flat curve over a strictly henselian discrete valuation ring $R$ with algebraically closed residue field $\overline k$. Suppose that $X$  is regular and has geometrically irreducible generic fibre  as well as a geometrically reduced special fibre $X_{\overline k}$. Assume that $X_{\overline k}$ consists of the irreducible components $X_1,\dots, X_r$ and that the local intersection numbers of the $X_i$ are $0$ or $1$ (the latter  is the case if different components intersect at ordinary double points).  Furthermore, assume that the intersection graph without loops $\Gamma'(X_{\overline k})$ consists of $l$ arcs  of edges $\lambda_1,\dots,\lambda_l$,  starting at $X_1$ and ending
at $X_r$, each arc $\lambda_i$ consisting of $m_{i}$ edges. Then the component group $\mathcal J(R^{sh})/\mathcal J^0(R^{sh})$ has order $\sum_{i=1}^l \prod_{j\neq i}m_{j}$.
\end{pr}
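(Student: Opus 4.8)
The plan is to derive the formula from the general description of the component group in \cite[$\S 9.5$--$\S 9.6$]{BLR90} (going back to Raynaud) and then reduce it to a count of spanning trees of $\Gamma'(X_{\overline k})$. The substantive input, which I would simply quote, is the following: for a proper flat curve $X$ over $R$ that is regular, with geometrically irreducible generic fibre and geometrically reduced special fibre $X_{\overline k}=X_1\cup\cdots\cup X_r$, the component group $\Phi=\mathcal J(R^{sh})/\mathcal J^0(R^{sh})$ of the N\'eron model of $\mathrm{Pic}^0_{X/R}$ is computed from the intersection matrix $M=(X_i\cdot X_j)_{1\le i,j\le r}$ formed on the regular surface $X$. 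Since the special fibre is a principal divisor on $X$ and $X_{\overline k}=\sum_j X_j$, every row of $M$ sums to $0$; writing $\deg\colon\Z^r\to\Z$ for the sum map, one therefore has $\mathrm{im}(M)\subseteq\ker(\deg)$ and $\Z\cdot(1,\dots,1)\subseteq\ker(M)$, and the theory identifies $\Phi$ canonically with the finite group $\ker(\deg)/\mathrm{im}(M)$, that is, with the torsion subgroup of $\mathrm{coker}(M)$.

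Next I would translate this into graph theory. For $i\neq j$ the entry $X_i\cdot X_j$ is the number of points of $X_i\cap X_j$, which, because all local intersection numbers are $0$ or $1$, equals the number of edges joining $X_i$ and $X_j$ in $\Gamma'(X_{\overline k})$; and the vanishing of the row sums of $M$ forces $X_i^2=-\sum_{j\neq i}X_i\cdot X_j$, which is minus the degree of the vertex $X_i$ in $\Gamma'(X_{\overline k})$. Hence $-M$ is precisely the Laplacian matrix of the finite connected graph $\Gamma'(X_{\overline k})$, so $\Phi$ is its critical (sandpile) group; by Kirchhoff's matrix--tree theorem the order of the critical group of a connected graph equals its number of spanning trees. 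It thus remains to count the spanning trees of $G:=\Gamma'(X_{\overline k})$, which by hypothesis is the graph on $X_1$, $X_r$ and the internal vertices of the arcs, with $l$ internally disjoint arcs $\lambda_1,\dots,\lambda_l$ from $X_1$ to $X_r$, the arc $\lambda_i$ consisting of $m_i$ edges.

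The count is elementary. The graph $G$ has cycle rank $l-1$, so a spanning tree is obtained by deleting exactly $l-1$ edges; since the internal vertices of an arc $\lambda_j$ lie on no other arc, a spanning subforest must keep all but at most one edge of each $\lambda_j$, while keeping two arcs entirely would produce a cycle. Therefore a spanning tree consists in keeping all $m_i$ edges of exactly one arc $\lambda_i$ and deleting precisely one of the $m_j$ edges of each remaining arc $\lambda_j$ with $j\neq i$, and conversely every such choice is a spanning tree; hence $G$ has $\sum_{i=1}^{l}\prod_{j\neq i}m_j$ spanning trees, which is the asserted order of $\Phi$. The one real obstacle in this plan is the first step: the identification of $\Phi$ with the cokernel of the intersection matrix is the whole content of the theory of N\'eron models of Jacobians and the relative Picard functor, and I would borrow it wholesale from \cite[$\S 9.5$--$\S 9.6$]{BLR90}; everything after it --- passing to the Laplacian of $\Gamma'(X_{\overline k})$, invoking the matrix--tree theorem, and enumerating the spanning trees of a generalised theta graph --- is routine.
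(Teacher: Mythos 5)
The paper itself gives no proof of this statement: it is reproduced verbatim from \cite[\S 9.6, Prop.~10]{BLR90} purely for the reader's convenience, so there is no internal argument to compare yours against. Your proposal is a correct proof modulo the one input you quote, namely Raynaud's description of the component group as $\ker(\deg)/\mathrm{im}(M)$ for the intersection matrix $M$ on the regular model (\cite[\S 9.6, Thm.~1]{BLR90}); its hypotheses are indeed met here, the key point being that the special fibre is \emph{reduced}, so all multiplicities $d_i$ equal $1$, the gcd condition holds, and the weighted degree map $\beta((n_i))=\sum d_i n_i$ really is the plain sum map you use. Your translation into graph theory is also sound: with local intersection numbers $0$ or $1$ the off-diagonal entries of $M$ count edges of $\Gamma'(X_{\overline k})$, the row-sum relation (from $X_{\overline k}=\sum_j X_j$ being numerically trivial on each component) gives the diagonal, so $-M$ is the Laplacian, and the torsion of $\mathrm{coker}(M)$ is the critical group, whose order is the number of spanning trees by Kirchhoff; the enumeration of spanning trees of the generalised theta graph as $\sum_i\prod_{j\neq i}m_j$ is correct (one arc kept whole, one edge removed from each other arc). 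Two small points you should make explicit: connectedness of $\Gamma'(X_{\overline k})$, needed both for the finiteness of the critical group and for the tree count, follows from properness and flatness over the henselian base together with the geometrically irreducible generic fibre; and the case $l=1$ is covered by the empty product. For what it is worth, BLR's own proof of their Prop.~10 computes $\ker(\deg)/\mathrm{im}(M)$ for this particular configuration by direct linear algebra on the intersection matrix, whereas your route through the critical group and the matrix--tree theorem packages the same computation more conceptually and generalises immediately to arbitrary dual graphs.
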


We will use this result in the proof of Proposition~\ref{jacobianthm}.

\section{Local conditions at $p$}\label{sec:2}

Let $p>2$ be a prime number. Denote by $\mathbb{Z}_p$ the ring of $p$-adic integers and  by $\mathbb{Q}_p$ the field of $p$-adic numbers.

\begin{defn}\label{polyn}
Let $f(x,y)\in\Z_p[x,y]$ be a polynomial with $f(0,0)=0$ or  $v_p(f(0,0))> 2$. We say that $f(x, y)$ is of type:
\begin{enumerate}
\item[(H)] if
$f(x,y)=y^2-g(x)$, where $g(x)\in\Z_p[x]$ is of degree $7$ or $8$  and such that $$g(x)\equiv x(x-p)m(x)\bmod{p^2\Z_p[x]},$$ with 
$m(x)\in \mathbb{Z}_p[x]$ such that all the roots of its mod $p$ reduction are simple and non-zero;

\item[(Q)] if
$f(x,y)$ is of total degree $4$ and such that  $$f(x,y)\equiv px+x^2-y^2+x^4+y^4 \bmod{p^2\Z_p[x,y]}.$$
\end{enumerate}
\end{defn}

\medskip
For  $f(x,y) \in \Z_p[x,y]$  a polynomial of type (H) or (Q), we will consider  the projective curve $C$ defined by $f(x, y)=0$ as explained in Subsection~\ref{subsec:notation} and 
the scheme $\mathcal C$ over $\Z_p$ defined, for each case of Definition~\ref{polyn} respectively, as follows:
\begin{enumerate}
\item[(H)] the union of the two affine subschemes
$$U=\Spec (\Z_p[x,y]/(y^2- g(x))) \textrm{ and } V=\Spec(\Z_p[t,w]/(w^2-g(1/t)t^8))$$ glued along $\Spec(\Z_p[x,y,1/x]/(y^2-g(x))$ via  $x=1/t, y=t^{-4}w$;
\item[(Q)] the scheme  $\mathrm{Proj}(\Z_p[X,Y,Z]/(F(X, Y, Z)))$, where $F(X, Y, Z)$ is the homogenisation of $f(x, y)$.
\end{enumerate}
This scheme has generic fibre $C$.

\begin{pr}\label{prop:curve}
Let $f(x,y) \in \Z_p[x,y]$ be a polynomial of type (H) or (Q) and $C$ be the projective curve defined by $f(x, y)=0$. 
The  curve $C$ is a smooth projective and geometrically connected curve of genus $3$ over $\Q_p$ with stable reduction. Moreover,  the scheme $\mathcal C$ is the stable model of $C$ over $\Z_p$  and the stable reduction is geometrically integral with exactly one singularity, which is an ordinary double point. 
\end{pr}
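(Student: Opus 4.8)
\emph{Sketch of the intended proof.} The argument splits into three parts: (i) the generic fibre $C$ is smooth of genus $3$; (ii) the special fibre $\mathcal C_{\F_p}$ is geometrically integral with a single ordinary double point; (iii) $\mathcal C$ is normal, hence \emph{the} stable model of $C$ by uniqueness \cite[Ch.~10, Thm.~3.34]{Liu06}. For (i) in case (H) the point is that $g(x)$ is separable of degree $7$ or $8$, so that $C\colon y^2=g(x)$ is smooth, projective and geometrically integral of genus $3$ over $\Q_p$. Writing $g=x(x-p)m(x)+p^2h$, the roots of $g$ reducing to the simple nonzero roots of $\bar m$ are distinct units by Hensel, while the lower Newton polygon of $g$ at $x=0$ is controlled by $g(0)=p^2h(0)$, which by Definition~\ref{polyn} is $0$ or of valuation $\ge 3$, by the coefficient of $x$, of valuation exactly $1$ (essentially $-pm(0)$, with $m(0)$ a unit since $\bar m(0)\ne 0$), and by the coefficient of $x^2$, a unit: this forces one further root of valuation $1$ and, if $g(0)\ne 0$, one of valuation $\ge 2$, all simple. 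In case (Q) I would first compute from $\bar f=x^2-y^2+x^4+y^4$ that $\mathcal C_{\F_p}$ has a unique singular point $[0:0:1]$, an ordinary double point (using $\mathrm{char}\ne 2$); by the Jacobian criterion $\mathcal C\to\Spec\Z_p$ is smooth along $\mathcal C_{\F_p}\setminus\{[0:0:1]\}$, so by properness the generic fibre is smooth except possibly at points reducing to $[0:0:1]$. At that point I would work in the affine chart and use $f\equiv px+x^2-y^2+x^4+y^4\pmod{p^2}$: completing the square removes the linear part (one of whose two coefficients has valuation $1$, the other valuation $\ge 2$), and diagonalising the quadratic part, which is non-degenerate mod $p$, brings $f$ to the form $uv-\varpi+(\text{terms of order}\ge 3)$ with $\varpi\in\Z_p$ the value of $f$ at its unique $p$-adically small critical point. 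The hypothesis ``$f(0,0)=0$ or $v_p(f(0,0))>2$'' forces $v_p(\varpi)=2$, in particular $\varpi\ne 0$; by the standard local model for such points (cf.~\cite[Ch.~10]{Liu06}) the completed local ring of $\mathcal C$ there is $\Z_p[[u,v]]/(uv-\varpi)$, an $A_1$-type double point (or its non-split twist), with smooth generic fibre $uv=\varpi$ and nodal special fibre $uv=0$. Hence $C$ is a smooth plane quartic over $\Q_p$, so geometrically connected of genus $3$.

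For (ii): in case (H), $\mathcal C_{\F_p}$ is $y^2=\bar x^2\bar m(x)$; since $\bar m$ has simple nonzero roots, the only multiple root of $\bar x^2\bar m$ is $0$, so $(0,0)$ is the only affine singularity and it is an ordinary double point because $\bar m(0)\ne 0$ and $p\ne 2$; the chart $V$ (the cases $\deg g=7,8$ being slightly different) carries no singularity over $t=0$, and $\mathcal C_{\F_p}$ is geometrically integral because $\bar x^2\bar m(x)$ is not a square in $\overline{\F}_p[x]$. In case (Q) this was already done in (i): $\mathcal C_{\F_p}$ has a single node and is geometrically integral, since a reducible or non-reduced quartic has a one-dimensional singular locus or, by Bézout, at least three singular points, contradicting the presence of exactly one, nodal, singularity.

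For (iii): $\mathcal C$ is proper and flat over $\Z_p$ (finite flat of degree $2$ over $\mathbb P^1_{\Z_p}$ in case (H), a hypersurface in $\mathbb P^2_{\Z_p}$ cut out by a polynomial not divisible by $p$ in case (Q)). By (ii) its special fibre is an integral curve of arithmetic genus $3$ with a single ordinary double point, hence stable (genus $\ge 2$, no component $\cong\mathbb P^1$), and its generic fibre is the smooth genus-$3$ curve $C$, hence stable; so $\mathcal C\to\Spec\Z_p$ is a stable curve. Moreover $\mathcal C$ is normal: being locally cut out by one equation in a regular scheme it is Cohen--Macaulay, so satisfies $S_2$, and by (i)--(ii) it is regular away from the double point of its special fibre, so its non-regular locus has codimension $2$ and it satisfies $R_1$; Serre's criterion then gives normality. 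Uniqueness of the stable model \cite[Ch.~10, Thm.~3.34]{Liu06} identifies $\mathcal C$ with it. (Equivalently, blowing up $\mathcal C$ at the double point yields a regular semistable model whose special fibre is the normalisation of $\mathcal C_{\F_p}$ — a smooth genus-$2$ curve — meeting an exceptional $\mathbb P^1$ at two points, and contracting that $\mathbb P^1$ recovers $\mathcal C$.)

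The main obstacle is the local study at the double point in case (Q): because $\mathcal C_{\F_p}$ is already singular there, smoothness of the generic fibre is invisible on the special fibre, and one must check that the term $px$ genuinely smooths the node — precisely the computation $v_p(\varpi)=2$, where the congruences of Definition~\ref{polyn}, above all ``$f(0,0)=0$ or $v_p(f(0,0))>2$'', are used; the same computation also exhibits the node as a double point of thickness $2$, a fact that Proposition~\ref{jacobianthm} will need. In case (H) the analogous point is automatic once $g$ is known to be separable.
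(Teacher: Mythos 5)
Your proposal is correct, and its core --- the local analysis of the special fibre at $[0:0:1]$ (resp.\ at the origin in case (H)), integrality of the fibres, and the passage from stable fibres to \emph{the} stable model --- follows the same strategy as the paper; the differences are in how several sub-claims are discharged. The paper asserts smoothness of the generic fibre simply ``by the Jacobian criterion'', proves geometric integrality of both fibres by checking that the defining polynomials are irreducible over $\overline{\Q}_p$ resp.\ $\overline{\F}_p$ (deducing flatness of $\mathcal C$ from integrality of the total space), computes the completed local ring at the node via power-series square roots (citing Liu's examples in case (H)), and concludes that $\mathcal C$ is the stable model from semistability of $\mathcal C$ together with $g\geq 2$ and smoothness plus geometric connectedness of the generic fibre. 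You instead (a) actually verify generic smoothness --- by a Newton-polygon argument in case (H) and by the local model $uv-\varpi$ with $v_p(\varpi)=2$ in case (Q) --- correctly isolating where the hypothesis ``$f(0,0)=0$ or $v_p(f(0,0))>2$'' is used; in the paper this computation only appears in the proof of Proposition~\ref{jacobianthm}, where it gives the thickness $e=2$, so your write-up makes explicit a point the paper leaves terse; (b) obtain integrality of the special fibre in case (Q) from the singularity count via a B\'ezout-type argument instead of direct irreducibility of $X^4+Y^4+X^2Z^2-Y^2Z^2$ over $\overline{\F}_p$ --- this works, though you should phrase it with intersection multiplicities (two components would meet with total multiplicity $\geq 3$, incompatible with the unique singularity being an ordinary double point), since the distinct intersection points need not number three; and (c) reach ``stable model'' through normality (Cohen--Macaulay plus $R_1$, Serre's criterion) and uniqueness of the stable model, which cleanly justifies that $\mathcal C$ is a model in the paper's sense (normal), a point the paper glides over. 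None of this changes the substance; your route buys a self-contained check of smoothness and normality at the cost of a slightly longer local analysis.
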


\begin{proof}
With the description  we gave in Subsection~\ref{subsec:notation} of what we called the \emph{projective curve defined by $f$}, smoothness  over $\mathbb{Q}_p$ follows from the Jacobian criterion.  This implies that $C$ is a projective curve of genus $3$.

The polynomials defining the affine schemes $U$ and $V$ and the quartic polynomial $F(X, Y, Z)$ are all irreducible over $\overline{\Q}_p$, 
hence over $\Z_p$.  So the curve $C$ is geometrically integral (hence geometrically irreducible and geometrically connected) 
and   $\mathcal C$ is integral as a scheme over $\Z_p$.  It follows in particular that  $\mathcal C$ is flat over $\Z_p$  (cf.~\cite[Chap.~4, Corollary 3.10]{Liu06}). Hence, $\mathcal C$ is a model of $C $ over~$\Z_p$. 

We will show that $\mathcal{C}_{\F_p}$ is semistable (i.e.~reduced with only ordinary double points as singularities) with exactly one singularity. 

Combined with flatness, semistability will imply that the scheme $\mathcal C$ is semistable over $\Z_p$.
Since $C$ has genus greater than $2$, and $C=\mathcal C_{\Q_p}$ is smooth and geometrically connected, 
this is then equivalent to saying that $C$ has stable reduction at $p$ with stable model $\mathcal C$, as required (cf.~\cite[Theorem~3.1.1]{romagny}).

In what follows, 
we denote by $\bar{f}$ the reduction modulo $p$ of any polynomial $f$ with coefficient in~$\Z_p$.  In Case (H),  $\mathcal{C}_{\overline{\F}_p}$   is the union of the two affine subschemes 
$U'=\Spec({\overline{\F}_p}[x,y]/(y^2-x^2\bar m(x)))$ and $V'=\Spec({\overline{\F}_p}[t,w]/(w^2-\bar m(1/t)t^6))$, 
glued along $\Spec(\overline{\F}_p[x,y,1/x]/(y^2-\bar g(x))$ via $x=1/t $ and $y=t^{-4}w$ 
(cf.~\cite[Chap.~10, Example 3.5]{Liu06}).  
In Case (Q), the geometric special fibre is $\mathrm{Proj}({\overline{\F}_p}[X,Y,Z]/(\bar{F}(X, Y, Z)))$. 
In both cases, the defining polynomials are irreducible over ${\overline{\F}_p}$. 
Hence, $\mathcal{C}_{\overline{\F}_p}$ is  integral, i.e.~reduced and irreducible.

Next, we prove that $\mathcal C_{\overline{\F}_p}$  has only one ordinary double point as singularity.
For Case (H), see e.g.~\cite[Chap.~10, Examples 3.4, 3.5 and 3.29]{Liu06}. For Case (Q), we proceed analogously:
first consider the open affine subscheme of $\mathcal{C}_{\overline{\F}_p}$ defined by 
$U=\Spec(\overline{\F}_p[x,y]/\bar{f}(x,y))$, where $\bar{f}(x,y)=x^2-y^2+x^4+y^4\in \mathbb{F}_p[x, y]$. 
Since 
$\mathcal C_{\overline{\F}_p}\backslash U$ is smooth, it suffices to prove that $U$ has only ordinary double singularities.
Let $u\in U$. The Jacobian criterion shows that $U$ is smooth at  $u\neq (0,0)$. 
So suppose that  $u=(0,0)$, and note that $\bar f(x,y)=x^2(1+x^2)-y^2(1-y^2)$.  
Since $2\in\overline\F_p^\times$, there exist  $a(x)=1+xc(x)\in {\overline{\F}_p}[[x]]$ and $b(y)=1+yd(y)\in {\overline{\F}_p}[[y]]$  
such that $1+x^2=a(x)^2 $ and $1-y^2=b(y)^2$, by (\cite[Chap.~1, Exercise 3.9]{Liu06}). Then we have 
$$\widehat{\mathcal O}_{U,u}\cong {\overline{\F}_p}[[x,y]]/(xa(x)+yb(y))(xa(x)-yb(y))\cong {\overline{\F}_p}[[t,w]]/(tw) .$$
It follows that $\mathcal C_{\overline{\F}_p}$ has only one singularity (at $[0:0:1] $) which is an ordinary double singularity.
We have thus showed that $\mathcal C$ is the stable model of $C$ over $\Z_p$ and that its special 
fibre is geometrically integral and has only one ordinary double singularity.

\end{proof}

\begin{pr}\label{jacobianthm} Let $f(x,y) \in \Z_p[x,y]$ be a polynomial of type (H) or (Q) and $C$ be the projective curve defined by $f(x, y)=0$. 
 The Jacobian variety $\mathrm{Jac}(C)$ of the curve $C$  has a N\'eron model $\mathcal J$ over 
 $\Z_p$ which has semi-abelian reduction  of  toric rank  $1$. The component group of the geometric 
 special fibre of $\mathcal J$ over $\overline \F_p$ has order $2$.

\end{pr}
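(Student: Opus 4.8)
\emph{Plan.} By Proposition~\ref{prop:curve}, the curve $C$ has stable reduction over $\Z_p$ with stable model $\mathcal C$, whose special fibre $\mathcal C_{\overline{\F}_p}$ is geometrically integral with exactly one singularity, an ordinary double point. As recalled in Subsection~\ref{subsec:jacobians} (see \cite[$\S 9.7$, Corollary~2]{BLR90}), this already guarantees that $J=\Jac(C)$ has a N\'eron model $\mathcal J$ over $\Z_p$ with semi-abelian reduction and $\mathcal J^0\cong\mathrm{Pic}^0_{\mathcal C/\Z_p}$. The two remaining assertions — toric rank~$1$ and component group of order~$2$ — are insensitive to the unramified base change $\Z_p\to R:=\Z_p^{sh}$, so I would argue over $R$, whose residue field is $\overline{\F}_p$. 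The toric rank is then immediate from Proposition~\ref{BLRExactSeq}: the dual graph $\Gamma(\mathcal C_{\overline{\F}_p})$ is a single vertex carrying one loop, so $\mathrm{rk}\,H^1(\Gamma(\mathcal C_{\overline{\F}_p}),\Z)=1$; equivalently, the normalisation $\widetilde X$ of $\mathcal C_{\overline{\F}_p}$ is a smooth curve of genus~$2$ (one node drops the arithmetic genus from $3$ to $2$), so the torus $T$ in the exact sequence of Proposition~\ref{BLRExactSeq} has dimension $3-2=1$.

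The crux is to show that this node has \emph{thickness}~$2$, i.e.\ that $\widehat{\mathcal O}_{\mathcal C,x}\cong R[[u,v]]/(uv-p^2)$ at the singular point $x$; this is precisely where — and the only place where — the hypothesis ``$f(0,0)=0$ or $v_p(f(0,0))>2$'' of Definition~\ref{polyn} is used, since it excludes exactly the value $v_p(f(0,0))=2$. In case (H), write $g(x)=x(x-p)m(x)+p^2h(x)$ and use Hensel's lemma to split off over $R$ the monic quadratic factor $x^2-(\alpha+\beta)x+\alpha\beta$ whose roots $\alpha,\beta$ are the two roots of $g$ reducing to~$0$. Comparing the coefficients of $x^0$ and $x^1$ in $g=(x^2-(\alpha+\beta)x+\alpha\beta)\,q(x)$, with $q(0)$ a unit, against those of $x(x-p)m(x)+p^2h(x)$ yields $v_p(\alpha\beta)=2+v_p(h(0))$ and — once $v_p(h(0))\geq 1$, which the hypothesis guarantees since $g(0)=p^2h(0)$ — also $v_p(\alpha+\beta)=1$; hence $v_p\big((\alpha-\beta)^2\big)=v_p\big((\alpha+\beta)^2-4\alpha\beta\big)=2$. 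Translating the node to the origin, completing the square, and absorbing the unit factors (which have square roots over $R$, as $p$ is odd and $\overline{\F}_p$ is algebraically closed) identifies $\widehat{\mathcal O}_{\mathcal C,x}$ with $R[[u,v]]/\!\big(uv+\tfrac14(\alpha-\beta)^2\big)$, of thickness~$2$.

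In case (Q) I would imitate the local computation in the proof of Proposition~\ref{prop:curve}: over $R$ choose square roots $A(x)^2=1+x^2$ and $B(y)^2=1-y^2$, so that in the coordinates $s=xA(x)$, $\tau=yB(y)$ the curve reads $f=s^2-\tau^2+psP(s)+p^2\widetilde f_2(s,\tau)$ with $P(0)=1$, where $f=px+x^2-y^2+x^4+y^4+p^2f_2$ with $f_2\in\Z_p[x,y]$. Completing the square in $s$ to absorb the $psP(s)$ term rewrites this as $uv+p^2E(u,v)$ in suitable coordinates $u,v$, with $E(0,0)=f_2(0,0)-\tfrac14$; and the hypothesis gives $v_p(f_2(0,0))\geq 1$, so $E(0,0)\equiv-\tfrac14\pmod p$ is a unit. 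In both cases one is reduced to the elementary fact that $R[[u,v]]/(uv+p^2w)\cong R[[u,v]]/(uv-p^2)$ whenever $w\in R[[u,v]]^{\times}$, which I would prove by an iterative change of variables reducing $w$ to its (unit) constant term and then rescaling. Hence the node has thickness~$2$.

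Finally, since the thickness is $2>1$, the stable model $\mathcal C$ is not regular at $x$; a single blow-up of $x$ resolves the singularity and produces the minimal regular model $\mathcal C_{min}$, whose special fibre is $\widetilde X\cup E$ with $E\cong\mathbb P^1_{\overline{\F}_p}$ a $(-2)$-curve meeting the genus-$2$ component $\widetilde X$ transversally in two points; there are no $(-1)$-curves, so this really is the minimal regular model. Its intersection graph without loops $\Gamma'(\mathcal C_{min,\overline{\F}_p})$ then consists of two vertices joined by two arcs of one edge each, so Proposition~\ref{prop:Phi} gives that the component group has order $\sum_{i=1}^{2}\prod_{j\neq i}m_j=m_2+m_1=2$. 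The main obstacle is the thickness computation — above all the bookkeeping in case (Q), and in both cases verifying that the thickness is \emph{exactly}~$2$ rather than larger, which is exactly what the condition on $f(0,0)$ in Definition~\ref{polyn} secures.
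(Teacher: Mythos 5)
Your proposal is correct and follows essentially the same route as the paper: N\'eron model and toric rank via the stable model and its dual graph, then the component group via the thickness of the node, computed to be exactly $2$ by completing squares, with the hypothesis on $f(0,0)$ ruling out $v_p(f(0,0))=2$. The only differences are cosmetic bookkeeping (Hensel-factoring $g(x)$ in case (H) instead of the paper's direct rewriting, and resolving the $A_1$-singularity by one explicit blow-up rather than invoking Liu's chain-of-$(e-1)$-lines corollary); note only that in case (H) the hypothesis is really needed for $v_p(\alpha\beta)\geq 3$ (the equality $v_p(\alpha+\beta)=1$ holds unconditionally), which still yields $v_p\bigl((\alpha-\beta)^2\bigr)=2$ as you claim.
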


\begin{proof}
By Proposition~\ref{prop:curve}, the curve $C$ is a smooth projective geometrically connected curve of genus $3$ over $\Q_p$ with stable reduction and stable model $\mathcal C$ over $\Z_p$. 
Let   $\mathcal{C}_{min}$ 
be the minimal regular model of $C$. As recalled in Subsection~\ref{subsec:jacobians},  $\mathrm{Jac}(C)$ admits a N\'eron model 
 $\mathcal J$  over $\Z_p$ and  the canonical morphism 
$\mathrm{Pic}^0_{\mathcal C/\Z_p}\to \mathcal J^0$ is an isomorphism. In particular, $\mathcal J$ 
has semi-abelian reduction and $\mathcal J^0_{\F_p}\cong\mathrm{Pic}^0_{\mathcal C_{\F_p}/{\F_p}}$. 
Since $\mathcal C_{min}$ is also semistable, 
we have $\mathrm{Pic}^0_{\mathcal C_{min}/S}\cong \mathcal J^0$. 

By Proposition \ref{BLRExactSeq}, the  toric rank of $\mathcal J^0_{\overline\F_p}$ is equal to the 
rank of the cohomology group of the dual graph of $\mathcal C_{\overline \F_p}$.  
Since $\mathcal C_{\overline \F_p}$ is  irreducible and has only one ordinary double point, 
the dual graph consists of one vertex and one loop, so the rank of $\mathcal J^0_{\overline \F_p} $ is $1$.

To determine the order of the component group  of the geometric special fibre $\mathcal J_{\overline \F_p}$, 
we apply Proposition \ref{prop:Phi} to the minimal regular model $\mathcal C_{min}\times \Z_p^{sh}$, 
where $\Z_p^{sh} $ is the strict henselisation of $\Z_p$. This is still regular and semistable 
over $\Z_p^{sh}$ (cf.~\cite[Chap.~10, Proposition~3.15-(a)]{Liu06}).
Let $e$ denote the thickness of the ordinary double point  of $\mathcal C_{\overline\F_p}$ 
(as defined in \cite[Chap.~10, Definition3.23]{Liu06}). Then by \cite[Chap.~10, Corollary 3.25]{Liu06}, 
the geometric special fibre $\mathcal C_{min,\overline \F_p} $ of $\mathcal C_{min}\times \Z_p^{sh}$ 
consists of a chain of $e-1$ projective lines over $\F_p$ and one component of genus $2$ 
(where the latter corresponds to the irreducible component $\mathcal C_{\overline \F_p}$), which meet transversally at rational points.
It follows from Proposition \ref{BLRExactSeq}
that  the order of the component group $\mathcal J(\Z_p^{sh})/\mathcal J^0(\Z_p^{sh})$ of the geometric special fibre  is equal to the thickness~$e$.

We will now show that in both cases (H) and (Q), the thickness $e$ is equal to $2$, which will 
conclude the proof of Proposition~\ref{jacobianthm}. For this, in several places, we will use 
the well-known  fact that every formal power series in $\Z_p[[x]]$ (resp. $\Z_p[[y]]$, $\Z_p[[x,y]]$) 
with constant term $1$ (or more generally a unit square in $\Z_p$) is a square in $\Z_p[[x]]$ 
(resp. $\Z_p[[y]]$, $\Z_p[[x,y]]$) of some invertible formal power series. 

Let $U$ denote the affine subscheme $\Spec(\Z_p[x,y]/(f(x,y)))$ which contains the ordinary double point $P=[0:0:1]$. 
Firstly, we claim that, possibly after a finite extension of scalars $R/\Z_p$ which splits the singularity, in both cases we may write in $R[[x,y]]$:
\begin{equation}\label{formf}\pm f(x,y)=x^2a(x)^2-y^2b(y)^2+p\alpha x+p^2yg(x,y)+p^{r}\beta
\end{equation}
where $ a(x) \in R[[x]]^\times,b(y) \in R[[y]]^\times, g(x,y)\in\Z_p[x,y], \alpha\in\Z_p^\times$, $\beta\in\Z_p$. Moreover, from the assumptions on $f$, it follows that either $\beta=0$, or $\beta \in \Z_p^\times $ and $r=v_p(f(0,0))> 2$.  

We prove the claim case by case:
\begin{enumerate}
\item[(H)]  We have $f(x,y)=y^2-g(x)=y^2-x(x-p)m(x)+p^2h(x)$ for some $h(x)\in\Z_p[x]$.  Since  $h(x)=h(0)+xs(x)$ for some 
$s(x)\in\Z_p[x]$  and $m(x)+ps(x)=m(0)+p s(0)+xt(x)$  for some $t(x)\in\Z_p[x]$, we obtain
\begin{eqnarray*}
f(x,y) &=& y^2-x^2m(x)+px(m(x)+ps(x))+p^2h(0)\\
        &=& y^2-x^2(m(x)-pt(x))+px(m(0)+ps(0))+p^2h(0).
\end{eqnarray*}

Since $m(0)\neq 0\pmod p,$ we have $m(0)-pt(0)\in\Z_p^\times, $ hence if we extend  the scalars to some finite extension  
$R$ over $\Z_p$, in which $m(0)-pt(0)$ is a square, we get that $(m(x)-pt(x))$ is a square of some $a(x)$ in $R[[x]]^\times$. 
Then $-f(x,y)$ has the expected form.
 Note that $R/\Z_p$ is unramified because $p\neq 2$ and $m(0)\neq 0\pmod p$, so we still denote  the ideal of $R$ above $p\in\Z_p$ by $p$.

\item[(Q)] We have $f(x,y)=x^4+y^4+x^2-y^2+px+p^2h(x,y)$ for some $h(x,y)\in\Z_p[x,y]$.
We may write $h(x,y)=\delta+x\gamma+x^2s(x)+yt(x,y)$ for some $\gamma,\delta\in\Z_p$, $s(x) \in\Z_p[x]$ and $t(x,y)\in\Z_p[x,y]$.  We obtain  
\begin{equation*}\begin{aligned}
f(x,y)&=x^2(1+x^2)-y^2(1-y^2)+px+p^2(\delta+x\gamma+x^2s(x)+yt(x,y))\\
 & =  x^2(1+x^2+p^2 s(x))-y^2(1-y^2)+px(1+p\gamma)+p^2yt(x,y)+p^2\delta.
\end{aligned}\end{equation*}
Since $1+x^2+p^2 s(x)$ and $1-y^2$ have constant terms which are squares in $\Z_p^\times$, the formal power series are 
squares in $\Z_p[[x]]$, resp. $\Z_p[[y]]$.   So $f(x,y)$ again has the desired form.

\end{enumerate}
Next, we show that  $e=2$ for $\pm f(x,y)$ of the form \eqref{formf}. In $R[[x,y]]$,  
we have
$$\pm f(x,y)=\left(xa(x)+p\frac{\alpha}{2a(x)}\right)^2-\left(yb(y)-p^2\frac{g(x,y)}{2b(y)}\right)^2+p^2c(x,y),$$
where $c(x,y)=p^{r-2}\beta-\frac{\alpha^2}{4a(x)^2}+p^2\frac{g(x,y)^2}{4b(y)^2}$. Since either $\beta=0$ or  $r>2$ and $\frac{\alpha^2}{4a(0)^2}\not \equiv 0\pmod p$,  
the constant term $\gamma$ of the formal power series $c(x, y)$ belongs to $R^\times$. It follows that  $\gamma^{-1}c(x,y)$ is the square of some formal power series 
$d(x,y)\in R[[x,y]]^\times$. Defining the variables
$$u=\frac{xa(x)}{d(x,y)}+p\frac{\alpha}{2a(x)d(x,y)}-\frac{yb(y)}{d(x,y)}+p^2\frac{g(x,y)}{2b(y)d(x,y)}$$ and 
$$v= \frac{xa(x)}{d(x,y)}+p\frac{\alpha}{2a(x)d(x,y)}+\frac{yb(y)}{d(x,y)}-p^2\frac{g(x,y)}{2b(y)d(x,y)},$$ we get
$ \widehat O_{U\times R,P}\cong R[[u,v]]/(uv\pm p^2\gamma)$. Since  $ \gamma\in R^\times$,  it follows that  $e=2$.

\end{proof}

\section{Local conditions at $q$}\label{sec:3}

This section is devoted to the proof of the following key result.
In the statement, the two conditions on the characteristic polynomial, namely non-zero trace and irreducibility modulo $\ell$, are the ones appearing in Theorem~2.10 of~\cite{AAKRTV14} which is used to prove the main Theorem~\ref{thm:main}.

\begin{thm}\label{A} Let $\ell \geq 13$ be a prime number. For every prime number $q>1.82\ell^2$, there exists a smooth geometrically connected curve $C_q$ of genus $3$ over $\F_q$ whose Jacobian variety $\Jac(C_q)$ is a $3$-dimensional ordinary absolutely simple abelian variety such that the characteristic polynomial of its Frobenius endomorphism is irreducible modulo $\ell$ and has non-zero trace modulo $\ell$.

Moreover, for  $\ell\in\{3,5,7,11\}$, there exists a prime number $q>1.82\ell^2$ such that the same statement holds.
\end{thm}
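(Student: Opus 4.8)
The plan is to produce the curve $C_q$ by combining Honda--Tate theory in the ordinary case with the fact that every principally polarised $3$-dimensional abelian variety over a finite field is a Jacobian (up to quadratic twist). First I would invoke Proposition~\ref{irredmodell} (whose proof is deferred to Section~\ref{sectionirred}): for $\ell\geq 13$ and any prime $q>1.82\ell^2$ it guarantees the existence of an \emph{ordinary} $q$-Weil polynomial $h(T)\in\Z[T]$ of degree $6$, i.e.\ a monic degree-$6$ polynomial $h(T)=T^6+a_1T^5+a_2T^4+a_3T^3+qa_2T^2+q^2a_1T+q^3$ all of whose complex roots have absolute value $q^{1/2}$, with $\gcd(a_3,q)=1$ (ordinarity), with $a_1\not\equiv 0\pmod\ell$ (non-zero trace mod $\ell$) and with $h(T)$ irreducible modulo $\ell$. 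For $\ell\in\{3,5,7,11\}$ the statement of Proposition~\ref{irredmodell} is understood to furnish at least one admissible prime $q>1.82\ell^2$ together with such a polynomial, so the ``moreover'' clause of the Theorem reduces immediately to the ``moreover'' clause of Proposition~\ref{irredmodell}; the rest of the argument below is uniform in the two cases.

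Next I would feed $h(T)$ into Honda--Tate theory. Irreducibility of $h$ over $\Q$ (which follows from irreducibility mod $\ell$) means $h$ is the characteristic polynomial of Frobenius of a \emph{simple} abelian variety $A_q/\F_q$ of dimension $3$; ordinarity of $h$ (the middle coefficient $a_3$ is coprime to $q$, equivalently the Newton polygon has slopes $0$ and $1$ each with multiplicity $3$) forces $A_q$ to be ordinary, and in the ordinary simple case the centre of the endomorphism algebra equals $\Q[T]/(h)$ and the algebra is commutative of dimension $2\dim A_q$, so $A_q$ is in fact \emph{absolutely} simple provided $h(T)$ has no root that is a root of unity times a power of $q^{1/2}$ of lower ``period'' — this is where one uses that $h$ is irreducible over $\Q$ and that no $\Z$-linear substitution $T\mapsto \zeta T$ sending $h$ to another integer polynomial can occur for an irreducible sextic with non-zero trace (here the hypothesis $a_1\neq 0$ is exactly what rules out the quadratic-twist/base-change degeneracies, cf.\ the criterion of Howe--Zhu). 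I would then choose a principal polarisation on $A_q$: an ordinary abelian variety over $\F_q$ of dimension $3$ always admits one (e.g.\ via the theory of the Deligne module / the fact that the relevant class group obstruction vanishes here, or simply by enlarging to an isogenous ppav in the same isogeny class — it suffices to have \emph{some} ppav with the prescribed characteristic polynomial, since the characteristic polynomial is an isogeny invariant).

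Finally I would translate the ppav into a curve. By the classification of ppav's of dimension $3$ over a field (Oort--Ueno), every indecomposable principally polarised abelian threefold over $\overline{\F_q}$ is the Jacobian of a smooth genus-$3$ curve; since $A_q$ is (absolutely) simple it is indecomposable as a ppav, hence $A_{q,\overline{\F_q}}\cong\Jac(\overline C)$ for some smooth genus-$3$ curve $\overline C/\overline{\F_q}$, and by descent (the obstruction lies in a quadratic twist, as recalled in the introduction of Section~\ref{sec:3}) there is a smooth geometrically connected genus-$3$ curve $C_q/\F_q$ with $\Jac(C_q)$ isomorphic over $\F_q$ to $A_q$ or to its quadratic twist. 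Replacing $h(T)$ by $h(-T)$ if necessary — which preserves ordinarity, preserves irreducibility mod $\ell$, and sends $a_1$ to $-a_1$ so preserves non-vanishing of the trace mod $\ell$ — I may assume $\Jac(C_q)\cong A_q$. Then $\Jac(C_q)$ is a $3$-dimensional ordinary absolutely simple abelian variety whose Frobenius characteristic polynomial is $h(T)$, which is irreducible modulo $\ell$ with non-zero trace modulo $\ell$, as required.

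I expect the main obstacle to be the \emph{absolute} simplicity together with the descent of the curve to $\F_q$: proving absolute simplicity requires ruling out that $h(T)$ becomes reducible after base change to $\F_{q^n}$, which by Honda--Tate amounts to a statement about the multiplicative relations among the Frobenius eigenvalues (the field $\Q[T]/(h)$ contains no proper CM subfield over which the eigenvalues already live), and this is precisely the kind of condition that Proposition~\ref{irredmodell} must be engineered to deliver — so the real content is hidden in that proposition, and here I am mainly organising the deductions from it. The quadratic-twist bookkeeping for the curve is routine but must be done carefully to ensure the final $C_q$ is defined over $\F_q$ and not merely over $\F_{q^2}$.
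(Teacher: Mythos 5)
Your overall route coincides with the paper's: Proposition~\ref{irredmodell} supplies the ordinary Weil $q$-polynomial, irreducible mod $\ell$ with non-zero trace mod $\ell$; Honda--Tate in Howe's ordinary formulation gives a simple ordinary isogeny class of dimension $3$; Howe's theorem gives a principally polarised member; Oort--Ueno gives a curve over $\overline{\F}_q$, which is irreducible (hence smooth) because the Jacobian is simple; and Serre's descent criterion brings the curve down to $\F_q$ up to a quadratic twist, which you correctly absorb by replacing $P_q(X)$ with $P_q(-X)$.

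There is, however, a genuine gap at the absolute-simplicity step. You assert that the Howe--Zhu criterion together with $a\neq 0$ is ``exactly'' what rules out all base-change degeneracies, and that the real content is engineered into Proposition~\ref{irredmodell}. Neither claim holds. The Howe--Zhu criterion (Proposition~\ref{howe-zhu}) leaves two possible failure modes when $\Q(\pi^d)\subsetneq\Q(\pi)$: either $m_A(X)\in\Z[X^d]$, or $\Q(\pi)=\Q(\pi^d,\zeta_d)$. The non-zero trace, combined with ordinarity (which forces the degree-$3$ coefficient to be non-zero), eliminates only the first branch, since that branch forces the characteristic polynomial to be $X^6+cX^3+q^3$. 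The second branch is untouched by the trace condition: one must show that $\Q(\pi)$ cannot equal $\Q(\zeta_7)$ or $\Q(\zeta_9)$ with $\Q(\pi^d)$ its unique imaginary quadratic subfield. This is the actual content of the paper's Proposition~\ref{abssimple}: $\phi(d)=6$ forces $d\in\{7,9,14,18\}$; the relation $\pi^{\sigma}=\zeta_d\pi$ with $\sigma$ of order $3$ yields $k^2+k+1\equiv 0\pmod d$, which excludes $d=9,18$; and for $d=7,14$ the characteristic polynomial of $\pi^d$ would have to be the cube of a quadratic of discriminant $-7$, forcing $\alpha^2=9(4q^d-7)$, which has no solution for odd $q$ by a parity argument. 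None of this is, or could be, delivered by Proposition~\ref{irredmodell}, which only controls the coefficients modulo $\ell$; so your plan is missing the argument that actually establishes absolute simplicity. A smaller inaccuracy: the existence of a principally polarised variety is a theorem of Howe about the \emph{isogeny class}, not a property of an arbitrary fixed ordinary abelian threefold; your parenthetical hedge points in the right direction, but the claim as stated is not the fact being used.
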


For any integer $g \geq 1$, a $g$-dimensional abelian variety over a finite field $k$ with $q$ elements is said to be \emph{ordinary} if its group of  $\mathrm{char}(k)$-torsion points has rank $g$.

The proof of Theorem~\ref{A} relies on Honda-Tate theory, which relates  abelian varieties to  Weil polynomials:

\begin{defn}\label{weilpol}
A \emph{Weil $q$-polynomial}, or simply a \emph{Weil polynomial},  is a monic polynomial $P_q(X) \in \Z[X]$ of even degree $2g$ whose complex roots are all \emph{Weil $q$-numbers}, i.e., algebraic integers with absolute value $\sqrt{q}$ under all of their complex embeddings.
Moreover, a Weil $q$-polynomial is said to be \emph{ordinary} if its middle coefficient is coprime to $q$. 
\end{defn}
In particular, for $g=3$, every Weil $q$-polynomial of degree~6 is of the form
\begin{equation*}
P_q(X)=X^6+aX^5+bX^4+cX^3+qbX^2+q^2aX+q^3
\end{equation*}
for some integers $a$, $b$ and $c$ (cf.~\cite[Proposition 3.4]{Howe95}). Such a Weil polynomial is ordinary if, moreover, $c$ is coprime to $q$.

Conversely, not every polynomial of this form is a Weil polynomial. However, we will prove in  Proposition~\ref{boundqweil}  that for $q > 1.82 \ell^2$, 
every polynomial as above with  $|a|,|b|,|c| <\ell$ is a Weil $q$-polynomial.

As an important example, the characteristic polynomial of the Frobenius endomorphism of an 
abelian variety over $\mathbb{F}_q$ 
is a Weil $q$-polynomial, by the Riemann hypothesis as proven by Deligne.\\

A variant of the Honda-Tate Theorem (cf.~\cite[Theorem~3.3]{Howe95}) states that the map which sends an ordinary abelian variety over $\F_q$ to the characteristic polynomial of its Frobenius endomorphism induces a bijection between the set of isogeny classes of ordinary abelian varieties of dimension $g \geq 1$ over $\F_q$ and the set of ordinary Weil $q$-polynomials of degree $2g$. Moreover, under this bijection, isogeny classes of simple ordinary abelian varieties correspond to irreducible ordinary Weil $q$-polynomials.\\

Hence, the proof of Theorem~\ref{A} consists in proving the existence of an irreducible ordinary Weil $q$-polynomial 
of degree 6 which gives rise to an isogeny class of simple ordinary abelian varieties of dimension 3. 
By Howe (cf.~\cite[Theorem~1.2]{Howe95}), such an isogeny class contains a principally polarised abelian variety $A$ 
over $\F_q$, which is the Jacobian variety of some curve $C_q$ defined over $\overline{\mathbb F}_q$ by results due to Oort and Ueno. 
If this abelian variety $A$ is moreover absolutely simple, the curve is geometrically irreducible and we can conclude by a Galois descent argument. 
Thus, it is a natural question whether the Weil $q$-polynomial determines if the abelian varieties in the isogeny class are absolutely simple.

In \cite{HoweZhu02}, Howe and Zhu give a sufficient condition for an abelian variety over a finite field to be absolutely simple; for ordinary varieties, this condition is also necessary. Let $A$ be a simple abelian variety over a finite field, $\pi$ its Frobenius endomorphism and $m_A(X)\in\mathbb{Z}[X]$ the minimal polynomial of $\pi$. Since $A$ is simple, the subalgebra $\Q(\pi)$ of $\End(A)\otimes \Q$  is a field; it contains a filtration of  subfields  $\Q(\pi^d)$ for  $d>1$. If moreover $A$ is ordinary, then the fields $\End(A)\otimes \Q=\Q(\pi) $ and $\Q(\pi^d)$ $(d>1)$ are all CM-fields, i.e., totally imaginary quadratic extensions of a totally real field. A slight reformulation of Howe and Zhu's criterion is the following (see Proposition~3 and Lemma~5 of \cite{HoweZhu02}):
\begin{pr}[Howe-Zhu criterion for absolute simplicity]\label{howe-zhu} Let $A$ be a simple abelian variety over a finite field $k$.
If $\Q(\pi^d)=\Q(\pi)$ for all integers $d>0$, then $A$ is absolutely simple. If $A$ is ordinary, then the converse is also true, and if $\Q(\pi^d)\neq\Q(\pi)$ for some $d>0$, then $A$ splits over the degree $d$ extension of $k$.
Moreover, if $\Q(\pi^d)$ is a proper subfield of $\Q(\pi)$ such that $\Q(\pi^r)=\Q(\pi)$ for all $r<d$, then either $m_A(X)\in\Z[X^d] $, or  $\Q(\pi)=\Q(\pi^d,\zeta_d)$ for a primitive $d$-th root of unity $\zeta_d$.
\end{pr}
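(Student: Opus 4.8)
The plan is to deduce the statement from the work of Howe and Zhu: it is a repackaging of Proposition~3 and Lemma~5 of~\cite{HoweZhu02}, so the task is to recover each clause in the form stated here. Write $k=\F_q$, let $k_n/k$ be the degree-$n$ extension, and recall that $A_{k_n}$ has Frobenius endomorphism $\pi^n$. Since $A$ is simple over a finite field, $A_{k_n}$ is isotypic: the characteristic polynomial of $\pi$ on $A$ is a power of the irreducible $m_A$, so the set of $n$-th powers of the roots of $m_A$ is again a single Galois orbit, whence the characteristic polynomial of $\pi^n$ on $A_{k_n}$ is a power of the minimal polynomial $m_{\pi^n}$. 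Writing $A_{k_n}\sim B^{m_n}$ with $B$ simple over $k_n$, one gets $\Q(\pi_B)\cong\Q(\pi^n)$ and $\End^0(A_{k_n})\cong M_{m_n}(\End^0 B)$, with centre $\Q(\pi^n)$.

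For the first clause and the ordinary converse I would compare $\End^0 A=\End^0(A_k)\hookrightarrow\End^0(A_{k_n})$ over their centres $\Q(\pi)\supseteq\Q(\pi^n)$. In the ordinary case this is transparent: $A$, $B$ and every simple factor of $A_{k_n}$ are ordinary, since the $p$-rank is additive and stable under base change, so $\End^0 A=\Q(\pi)$ and $\End^0 B=\Q(\pi^n)$ with $2\dim A=[\Q(\pi):\Q]$ and $2\dim B=[\Q(\pi^n):\Q]$; then $[\Q(\pi):\Q]=2\dim A=2\dim A_{k_n}=m_n[\Q(\pi^n):\Q]$ forces $m_n=[\Q(\pi):\Q(\pi^n)]$. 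Hence $A_{k_n}$ is simple precisely when $\Q(\pi^n)=\Q(\pi)$, which yields at once the ordinary converse (absolute simplicity means $m_n=1$ for all $n$, hence $\Q(\pi^n)=\Q(\pi)$) and the splitting claim (if $\Q(\pi^d)\neq\Q(\pi)$ then $m_d>1$, so $A_{k_d}$ is not simple). For the first clause without the ordinariness hypothesis I would instead invoke Tate's description of $\End^0 A$ over $\Q(\pi)$ through its local invariants — $v(\pi)/v(q)\cdot[\Q(\pi)_v:\Q_p]$ at the places $v\mid p$, $0$ at the other finite places, $\tfrac12$ at the real places — and observe that, because $v(\pi^n)/v(q^n)=v(\pi)/v(q)$, these are unchanged on passing to $\End^0 B$ whenever $\Q(\pi^n)=\Q(\pi)$; thus $\End^0 A$ and $\End^0 B$ have the same class in $\mathrm{Br}(\Q(\pi))$, hence equal $\Q$-dimension, hence $m_n=1$, so $A$ is absolutely simple (the degenerate case $\Q(\pi)$ totally real, i.e.\ $\pi=\pm\sqrt q$, being immediate).

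The last clause is \cite[Lemma~5]{HoweZhu02}, a purely field-theoretic statement about the number field $K=\Q(\pi)$, which I would prove by Galois theory. Taking $d$ minimal with $L:=\Q(\pi^d)\subsetneq K$, I would pass to a Galois closure $F/\Q$ containing $\zeta_d$ and set $G=\Gal(F/\Q)$, $H=\mathrm{Stab}_G(\pi)$, $H_d=\mathrm{Stab}_G(\pi^d)$, so that $K=F^H$, $L=F^{H_d}$ and $H\lneq H_d$. For $g\in H_d$ the ratio $g(\pi)/\pi$ lies in $\mu_d$, defining a $1$-cocycle $\phi\colon H_d\to\mu_d$ whose zero set is exactly $H$. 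If $H_d$ acts trivially on $\mu_d$ — equivalently $\zeta_d\in L$ — then $\phi$ is a homomorphism, $H_d/H$ is cyclic of some order $e\mid d$ mapping isomorphically onto $\mu_e$, so the minimal polynomial of $\pi$ over $L$ is $X^e-\pi^e$ and $\pi^e\in L$; minimality of $d$ forces $e=d$, and partitioning the $\Q$-conjugates of $\pi$ by their $d$-th powers then gives $m_A(X)=m_{\pi^d}(X^d)\in\Z[X^d]$. If instead $\zeta_d\notin L$, I would analyse the coefficients of the minimal polynomial $\prod_{\zeta\in T}(X-\zeta\pi)$ of $\pi$ over $L$, where $T\subseteq\mu_d$ is the set of $\zeta$ with $\zeta\pi$ a $\Q$-conjugate of $\pi$ above $\pi^d$: each such coefficient lies in $L$ and has the shape $\pi^k$ times an element of $\Z[\zeta_d]$, and combining this with the minimality of $d$ (which forbids $\pi^r\in L$ for $0<r<d$) forces $K=L(\zeta_d)=\Q(\pi^d,\zeta_d)$, exactly as in \cite{HoweZhu02}.

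I expect the only step needing real care to be the Galois-theoretic dichotomy of the last clause — in particular the bookkeeping, when $\zeta_d\notin L$, that rules out every possibility except $K=L(\zeta_d)$ — whereas the first two clauses amount, in the ordinary case that is all that is used in the sequel, to a short dimension count, and in general to a direct appeal to Tate's theory along the lines of~\cite{HoweZhu02}.
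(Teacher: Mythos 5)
The paper offers no proof of this proposition at all: it is quoted as a ``slight reformulation'' of Proposition~3 and Lemma~5 of \cite{HoweZhu02}, so at the level of strategy your reduction to those two results is exactly what the paper does. Your sketches of the first two clauses are moreover sound and essentially reproduce Howe--Zhu's own arguments: the isotypy of $A_{k_n}$, the dimension count $m_n=[\Q(\pi):\Q(\pi^n)]$ in the ordinary case (using $\End^0=\Q(\pi)$ of degree $2\dim$), and, in general, the invariance of the Brauer class of $\End^0$ under $\pi\mapsto\pi^n$ when $\Q(\pi^n)=\Q(\pi)$. The first branch of your treatment of the last clause ($\zeta_d\in L:=\Q(\pi^d)$, giving $X^d-\pi^d$ as minimal polynomial over $L$ and hence $m_A\in\Z[X^d]$) is also correct.

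There is, however, a genuine flaw in the second branch of your proof of the last clause: the claim that $\zeta_d\notin L$ forces $K:=\Q(\pi)=L(\zeta_d)$ is false. Take $m_A(X)=X^6+cX^3+q^3$ irreducible with $\Q(\pi^2)=\Q(\pi)$ --- precisely the situation singled out in Proposition~\ref{abssimple} of this paper. Then $d=3$, $L=\Q(\pi^3)$ is an imaginary quadratic field which in general does not contain $\zeta_3$, the minimal polynomial of $\pi$ over $L$ is $X^3-\pi^3$ (all intermediate coefficients vanish), the first alternative $m_A\in\Z[X^d]$ holds, and $K\neq L(\zeta_3)$ since $[K:\Q]=6$ while $[L(\zeta_3):\Q]\leq 4$. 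So the dichotomy cannot be organised by whether $\zeta_d$ lies in $L$: in your second branch the conclusion must remain the disjunction, because the vanishing of all intermediate coefficients of $\prod_{\zeta\in T}(X-\zeta\pi)$ is not excluded by $\zeta_d\notin L$; and when some intermediate coefficient is nonzero, your argument only gives $\pi^r\in L(\zeta_d)$ for some $0<r<d$, hence $K\subseteq L(\zeta_d)$ by minimality of $d$ --- the reverse inclusion $\zeta_d\in K$, which is part of the assertion $\Q(\pi)=\Q(\pi^d,\zeta_d)$, still requires an argument you have not supplied. This deferred bookkeeping is exactly where the content of Howe--Zhu's Lemma~5 lies, so this clause should either be cited outright (as the paper does) or proved with the correct case division, not the one you propose.
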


From this criterion, Howe and Zhu give elementary conditions for a simple $2$-dimensional abelian variety to be absolutely simple, see~\cite[Theorem~6]{HoweZhu02}.  Elaborating on their criterion and inspired by \cite[Theorem~6]{HoweZhu02}, we prove the following  for dimension 3:

\begin{pr}\label{abssimple}
Let $A$ be an ordinary simple abelian variety of dimension $3$ over a finite field $k$ of odd cardinality $q$. Then either $A$ is absolutely simple or  the characteristic polynomial of the  Frobenius endomorphism of $A$ is of the form  $X^6+cX^3+q^3$ with $c$ coprime to $q$  and $A$ splits over the degree~$3$ extension of $k$.
\end{pr}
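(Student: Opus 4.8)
The plan is to apply the Howe--Zhu criterion (Proposition~\ref{howe-zhu}) to the field $\Q(\pi)$, where $\pi$ is the Frobenius endomorphism of $A$. Since $A$ is ordinary simple of dimension $3$, $K := \Q(\pi) = \End(A)\otimes\Q$ is a CM-field of degree $6$ over $\Q$, with maximal totally real subfield $K^+$ of degree $3$. Suppose $A$ is \emph{not} absolutely simple. By Proposition~\ref{howe-zhu} there is a smallest integer $d > 1$ with $\Q(\pi^d) \subsetneq K$, and moreover $[\Q(\pi^{d-1}) : \Q(\pi^d)]$ — in fact, by minimality $\Q(\pi^r) = K$ for all $r < d$, so $\Q(\pi^d)$ is a proper subfield of $K$. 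The first step is to enumerate the possibilities for $d$: since $[K:\Q] = 6$, any proper subfield has degree $1$, $2$, or $3$ over $\Q$. I would rule out $[\Q(\pi^d):\Q] = 1$ (which would force $\pi^d \in \Z$, contradicting that $K$ is a CM-field of degree $6$ generated by $\pi$, since then all conjugates of $\pi$ would be fixed $d$-th roots of $\pi^d$ lying in a field of degree dividing $2$ over $\Q(\pi^d)=\Q$). So $\Q(\pi^d)$ has degree $2$ or $3$; in either case, since $K/\Q(\pi^d)$ has degree $3$ or $2$ respectively, and by the last sentence of Proposition~\ref{howe-zhu} we have either $m_A(X) \in \Z[X^d]$ or $K = \Q(\pi^d, \zeta_d)$.

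The second step is to analyze these two alternatives together with the constraint that $d$ divides something reasonable. If $K = \Q(\pi^d, \zeta_d)$ with $\zeta_d \notin \Q(\pi^d)$, then $[\Q(\zeta_d):\Q]$ divides $[K:\Q(\pi^d)] \in \{2,3\}$, so $\varphi(d) \in \{2,3\}$, forcing $\varphi(d) = 2$ (as $\varphi$ never equals $3$), i.e.\ $d \in \{3,4,6\}$; but if $\varphi(d)=2$ then $[K:\Q(\pi^d)] = 2$, so $[\Q(\pi^d):\Q] = 3$, meaning $\Q(\pi^d)$ is totally real (a degree-$3$ subfield of the CM-field $K$ must be $K^+$). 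Then $\zeta_d$ generates the imaginary quadratic piece, so $d \in \{3,4,6\}$ forces $K \supseteq \Q(\zeta_3)$ or $\Q(\zeta_4)$. I would need to show this case cannot produce an ordinary abelian variety, or merge it with the other case; the cleanest route is probably to observe that in all surviving configurations $d$ must divide $3$ (or handle $d = 2$ via $m_A \in \Z[X^2]$ directly and show ordinariness fails). If instead $m_A(X) = P_q(X) \in \Z[X^d]$, then since $\deg P_q = 6$, we need $d \mid 6$, so $d \in \{2,3,6\}$; writing $P_q(X) = X^6 + aX^5 + bX^4 + cX^3 + qbX^2 + q^2aX + q^3$, the condition $P_q \in \Z[X^d]$ kills coefficients: $d=2$ forces $a = c = 0$, $d = 3$ forces $a = b = 0$, and $d = 6$ forces $a=b=c=0$. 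But $d=6$ (and $d=2$) gives $c = 0$, contradicting ordinarity (ordinarity of the Weil polynomial requires $q \nmid c$, in particular $c \neq 0$). This leaves exactly $d = 3$ with $a = b = 0$, i.e.\ $P_q(X) = X^6 + cX^3 + q^3$ with $q \nmid c$, and by Proposition~\ref{howe-zhu} $A$ splits over the degree-$3$ extension of $k$ — which is precisely the conclusion.

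The third step is to close the remaining gap: I must confirm that when $\Q(\pi^d) \subsetneq K$ with $d = 2$ coming from the $K = \Q(\pi^2,\zeta_2)$ branch, this does not occur — but $\zeta_2 = -1 \in \Q$, so that branch is vacuous and $d=2$ only arises via $m_A \in \Z[X^2]$, already excluded by ordinarity. Similarly the $\zeta_3, \zeta_4, \zeta_6$ sub-cases with $d$ possibly larger than what divides $6$ need the extra input that $\Q(\pi^d) = K^+$ is totally real of degree $3$; I would check that $\pi^d \in K^+$ real combined with $\pi$ a Weil $q$-number of absolute value $\sqrt q$ forces $\pi^d = \pm q^{d/2}$ up to the real embeddings, pin down the minimal polynomial of $\pi$ over $K^+ = \Q(\pi^d)$, and see that the resulting characteristic polynomial again has the shape $X^6 + cX^3 + q^3$ (the cyclotomic factor $\zeta_3$ or $\zeta_6$ naturally produces a $\Z[X^3]$ polynomial), thereby reducing to the $d=3$ case already handled, while $\zeta_4$ would force $i \in K$ and one checks this is incompatible with $K$ being generated by an ordinary Weil number of degree $6$ (or simply yields a polynomial in $\Z[X^2]$, again contradicting ordinarity).

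I expect the main obstacle to be the careful bookkeeping in the $K = \Q(\pi^d, \zeta_d)$ branch of Proposition~\ref{howe-zhu}: showing that every configuration there either is impossible for an ordinary $3$-dimensional abelian variety or collapses to $d = 3$ with characteristic polynomial $X^6 + cX^3 + q^3$. The $m_A \in \Z[X^d]$ branch is essentially a finite coefficient computation driven by $\deg = 6$ and ordinarity ($c \neq 0$); the subtlety is making sure no "mixed" splitting behavior (e.g.\ a degree-$2$ subfield that is itself not reached minimally) slips through, which is handled by invoking the minimality of $d$ and the fact that a degree-$2$ subfield of a CM field is either imaginary quadratic or is contained in $K^+$ — and a degree-$2$ CM subfield would make $\pi^d$ generate an imaginary quadratic field, forcing via the Weil-number condition a very constrained minimal polynomial that one checks again lands in the stated form or violates ordinarity.
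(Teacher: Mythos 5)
There is a genuine gap, and it sits exactly where the paper's proof does its real work. Your treatment of the branch $m_A(X)\in\Z[X^d]$ is fine and matches the paper's endgame: $d\mid 6$, ordinarity forces the middle coefficient $c\neq 0$, so $d=2,6$ are excluded, $d=3$ gives $X^6+cX^3+q^3$, and Howe--Zhu gives the splitting over the cubic extension. But in the branch $\Q(\pi)=\Q(\pi^d,\zeta_d)$ your key divisibility is backwards: you claim $[\Q(\zeta_d):\Q]$ divides $[K:\Q(\pi^d)]\in\{2,3\}$, whereas what is true is that $[K:\Q(\pi^d)]$ divides $\phi(d)$ (and $\phi(d)$ divides $[K:\Q]=6$ since $\Q(\zeta_d)\subseteq K$). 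Consequently your conclusion $\phi(d)=2$, i.e.\ $d\in\{3,4,6\}$, is unjustified, and you never consider $\phi(d)=6$, i.e.\ $d\in\{7,9,14,18\}$ with $K=\Q(\zeta_d)$. These are precisely the cases that carry the content of the proposition: the paper rules them out by taking $\sigma$ a generator of $\Gal(K/\Q(\pi^d))$, using (from the proof of Howe--Zhu's Lemma~5) that $\zeta_d$ may be chosen with $\pi^\sigma=\zeta_d\pi$, deducing $k^2+k+1\equiv 0\pmod d$ where $\zeta_d^\sigma=\zeta_d^k$ (which kills $d=9,18$ since $-3$ is not a square there), and then, for $d=7,14$, showing that the characteristic polynomial of $\pi^d$ would have to be the cube of a quadratic of discriminant $-7$, which forces $4q^d-7$ to be a perfect square and hence $2\mid q$ --- this is the only place the hypothesis that $q$ is odd enters, and your proposal never uses it.

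A second, related omission: you allow $\Q(\pi^d)$ to be the totally real cubic subfield $K^+$ and propose to analyse that case separately, but for an \emph{ordinary} simple abelian variety the fields $\Q(\pi^d)$, $d\geq 1$, are all CM-fields (this is stated just before Proposition~\ref{howe-zhu}); a CM-field has even degree, so the only possible proper subfield $\Q(\pi^d)\subsetneq K$ is imaginary quadratic. Using this fact at the outset both eliminates your degree-$3$ worry and, combined with the corrected divisibility, shows $\phi(d)\in\{3,6\}$, hence $\phi(d)=6$ and $K=\Q(\zeta_d)$ with $d\in\{7,9,14,18\}$ --- after which the cyclotomic and parity arguments sketched above are still needed to finish. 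As written, your argument would ``prove'' the statement without the oddness of $q$, so it cannot be complete.
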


\begin{proof}
Let $A$ be an ordinary  simple but not absolutely simple abelian variety of dimension $3$ over~$k$.  
Since $A$ is simple, the characteristic polynomial of $\pi$ is $m_A(X)$. We apply Proposition~\ref{howe-zhu}  to $A$:  
Let $d$ be the smallest integer such that $\Q(\pi^d)\neq\Q(\pi)$.
Either $m_A(X)\in\Z[X^d]$ or there exists a $d$-th root of unity $\zeta_d$ such that $\Q(\pi)=\Q(\pi^d,\zeta_d)$.

We will prove by contradiction that  $m_A(X)\in\Z[X^d]$.  Since  $m_A(X)$ is ordinary, the coefficient of degree $3$ is non-zero, and it will follow that  $d=3$ and  that $m_A(X)$ has the form $X^6+cX^3+q^3$, proving the proposition.

So, suppose that  $m_A(X)\not\in\Z[X^d]$.
The field $K=\Q(\pi)=\Q(\pi^d,\zeta_d)$ is a CM-field of degree $6$ over $\Q$, hence
its proper CM-subfield $L=\Q(\pi^d)$  has  to be a quadratic imaginary field.
It follows that $\phi(d)=3$ or $6$, where $\phi$ denotes the Euler totient function.
 However, $\phi(d)=3$ has no solution, so we must have $\phi(d)=6$, i.e. $d\in\{7,9,14,18\}$,  and $K=\Q(\zeta_d)$. 
Note that $\Q(\zeta_7)=\Q(\zeta_{14})$ and $\Q(\zeta_9)=\Q(\zeta_{18})$, and  they contain only one quadratic 
imaginary field; namely, $\Q(\sqrt{-7})$ for $d=7 $ (resp. $14$), and $\Q(\sqrt{-3})$ for $d=9$ (resp. $d=18$) 
(cf.~\cite{washington}).
Let $\sigma$ be a generator of the (cyclic) group $\mathrm{Gal}(K/L)$ of order~$3$. In their proof of \cite[Lemma 5]{HoweZhu02}, 
Howe and Zhu show that we can choose $\zeta_d$ such that $\pi^\sigma=\zeta_d \pi$.
Moreover, $\zeta_d^\sigma=\zeta_d^k$ for some integer $k$ (which can be chosen to lie in $[0,d-1]$). Since $\sigma$ is of order $3$, we have $\pi=\pi^{\sigma^3}=\zeta_d^{(k^2+k+1)}\pi$, which gives $k^2+k+1 \equiv 0\pmod d$.  This rules out the case $d=9$ and $18$, because $-3$ is neither a square modulo $9$ nor a square modulo $18$. So $d=7$ or~$14$,  $K=\Q(\zeta_7)$  and $\Q(\pi^d)=\Q(\sqrt{-7})$.
It follows that the characteristic polynomial of $\pi^d$, which   is of the form
 \[
 X^6+\alpha X^5+\beta X^4+\gamma X^3+\beta q^d X^2+\alpha q^{2d} X+q^{3d}\in\Z[X],
\]
  is the cube of a quadratic polynomial of discriminant $-7$. This is true  if and only if
 \begin{equation*}\label{quadequ}
\alpha^2-36q^d+63=0,\quad\alpha^2-3\beta+9q^d=0\quad\mbox{and}\quad  \alpha^3-27\gamma+54\alpha q^d=0,
 \end{equation*}
 that is,
 \begin{equation*}\label{simplequadequ}
 \alpha^2=9(4q^d-7),\quad \beta=3(5q^d-7)\quad\mbox{and}\quad  3\gamma=\alpha(10q^d-7).
 \end{equation*}
However, the first equation has no solution in $q$. Indeed, suppose that $4q^d-7$ is a square, 
say $u^2$ for some integer $u$. Then $u$ is odd, say $u=1+2t$ for some integer $t$, 
hence $4q^d=8+4t(t+1)$, so $2$ divides $q$, which contradicts the hypothesis.

Hence, we obtain that  $m_A(X)\in\Z[X^d]$ and Proposition~\ref{abssimple} follows.
\end{proof}

Finally, the proof of Theorem~\ref{A} relies  on  Proposition~\ref{abssimple} and the following proposition, 
whose proof consists on counting arguments and is postponed to Section~\ref{sectionirred}:

\begin{pr}\label{irredmodell}
 For any prime number  $\ell\geq 13$ and any prime number $q>1.82\ell^2$, 
 there exists an ordinary Weil $q$-polynomial   $P_q(X)=X^6+aX^5+bX^4+cX^3+qbX^2+q^2aX+q^3$, 
 with $a\not\equiv 0\pmod \ell$, which is irreducible modulo $\ell$. For $\ell\in\{3,5,7,11\}$, 
 there exists some prime number $q>1.82\ell^2$  and an ordinary Weil $q$-polynomial as above. 
 Moreover, for all $\ell \geq 3$, the coefficients $a,b,c$ can be chosen to lie in $\Z\cap [-(\ell-1)/2,(\ell-1)/2]$.
\end{pr}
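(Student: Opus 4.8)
The plan is to prove Proposition~\ref{irredmodell} by a counting argument in the spirit of Hall's paper, combined with the explicit characterization of Weil polynomials given in Proposition~\ref{boundqweil}. The key observation is that, thanks to Proposition~\ref{boundqweil}, once $q>1.82\ell^2$ \emph{every} triple $(a,b,c)\in\Z^3$ with $|a|,|b|,|c|<\ell$ yields a genuine Weil $q$-polynomial $P_q(X)=X^6+aX^5+bX^4+cX^3+qbX^2+q^2aX+q^3$, and such a polynomial is automatically ordinary as soon as $\ell\nmid c$ (since then $c$ is coprime to $q$). So the problem reduces to a purely combinatorial count: among the $(2\lceil(\ell-1)/2\rceil+1)^3$ residue triples $(\bar a,\bar b,\bar c)\in\F_\ell^3$ with $\bar a\neq 0$, I must show that at least one gives an irreducible sextic over $\F_\ell$. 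Equivalently, reducing mod $\ell$ and writing $\bar P(X)=X^6+\bar aX^5+\bar bX^4+\bar cX^3+\bar q\bar bX^2+\bar q^2\bar aX+\bar q^3$, I want the number of \emph{irreducible} such polynomials, as $(\bar a,\bar b,\bar c)$ ranges over $\F_\ell^3$ with $\bar a\neq 0$, to be positive.

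First I would fix $\bar q\in\F_\ell^\times$ and recognize the family $\{\bar P_{\bar a,\bar b,\bar c}\}$ as essentially the family of degree-$3$ ``$q$-symmetric'' polynomials: a monic sextic $h(X)$ satisfies $X^6h(\bar q/X)=\bar q^3 h(X)$ precisely when its coefficients obey the palindromic-with-weights pattern above, and such $h$ is irreducible of degree $6$ iff the associated cubic (in the variable $Y=X+\bar q/X$) is irreducible of degree $3$ over $\F_\ell$ \emph{and} the fibre extension $X^2-YX+\bar q$ is non-split over the cubic field — i.e. $-\bar q$ is a non-square after adjoining a root, or more precisely the discriminant condition distinguishing the degree-$6$ from degree-$3$ factorization. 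So the plan is: (i) set up the substitution $Y=X+\bar q X^{-1}$, giving a bijection between weighted-palindromic sextics and monic cubics $\tilde P(Y)=Y^3+\bar aY^2+(\bar b-3\bar q)Y+(\bar c-2\bar a\bar q)$; (ii) count irreducible cubics in this affine-linear family as $(\bar a,\bar b,\bar c)$ varies with $\bar a\neq 0$ — but since $(\bar b,\bar c)\mapsto(\bar b-3\bar q,\bar c-2\bar a\bar q)$ is a bijection of $\F_\ell^2$ for each fixed $\bar a$, the middle and constant coefficients of $\tilde P$ range freely over $\F_\ell$; (iii) for each such irreducible cubic, determine for how many of them the sextic lift $\bar P$ is itself irreducible versus a product of two cubics — this is governed by whether the norm (to $\F_\ell$) of the element $Y$ in the cubic field makes $-\bar q$ a square in the cubic extension, equivalently by a Legendre-symbol condition.

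The counting then goes as follows. The number of monic irreducible cubics over $\F_\ell$ is $(\ell^3-\ell)/3$, so with the constant coefficient free and $\bar b$ free but $\bar a\neq 0$ constrained, I get roughly $(\ell^3-\ell)/3 \cdot \frac{\ell-1}{\ell}$ irreducible cubics arising (a negligible correction handles the $\bar a=0$ slice). Of the resulting weighted-palindromic sextics, a proportion close to $1/2$ will be irreducible of degree $6$ and the rest will split as a product of two conjugate cubics; the precise split depends on the Legendre symbol $\legendre{-\bar q}{\ell}$ and on a trace/norm condition, but in all cases the number of degree-$6$ irreducibles is at least something like $\frac{1}{6}(\ell^3 - O(\ell^2))$, which is strictly positive for $\ell\geq 13$. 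To also control the ``non-zero trace'' requirement one just notes that it only excludes the single hyperplane $\bar a=0$, already removed. For the small primes $\ell\in\{3,5,7,11\}$ the asymptotic count is too weak, so there I would instead exhibit, for a suitable explicitly chosen prime $q>1.82\ell^2$, a concrete triple $(a,b,c)$ (found by a finite search, justified since there are only finitely many candidates) making $P_q$ irreducible mod $\ell$ with $a\not\equiv 0$; this is a finite verification and can be recorded in an example.

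The main obstacle I anticipate is step (iii): cleanly relating irreducibility of the sextic $\bar P$ to an arithmetic condition on the associated cubic, and then summing the resulting Legendre-symbol-weighted count over all irreducible cubics in the family. The danger is that the ``degree $6$ vs.\ two cubics'' dichotomy is controlled by a character sum $\sum \legendre{\ast}{\ell}$ over the cubic field whose cancellation one must bound (via Weil's bound for curves, or by a direct manipulation using that the relevant character is the quadratic one composed with a norm) well enough to keep the final lower bound positive for \emph{all} $\ell\geq 13$ rather than merely for $\ell$ large. A cleaner alternative, which I would try first, is to avoid the dichotomy entirely: simply count, over all $(\bar a,\bar b,\bar c)\in\F_\ell^3$, the number of triples for which $\bar P$ is irreducible by using the standard formula that the proportion of monic degree-$n$ polynomials (in a sufficiently generic linear family) that are irreducible is $1/n + O(\ell^{-1/2})$, invoked here with $n=6$ — but one must check that the weighted-palindromic constraint does not force $\bar P$ to always factor (it does not, since e.g. $X^6-\bar q^3$-type degeneracies are rare), i.e. that the family is not contained in the reducible locus. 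Establishing that genericity, i.e. that the Galois group of the generic member of this three-parameter family is large enough (contains a $6$-cycle), is the real crux; once it is in hand, a Chebotarev/function-field count over the parameter space $\F_\ell^3$ finishes the proof for $\ell\geq 13$, and the finite search handles the rest.
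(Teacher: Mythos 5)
Your overall frame matches the paper's: use Proposition~\ref{boundqweil} to make every triple $(a,b,c)$ with $|a|,|b|,|c|<\ell$, $a,c\neq 0$, automatically an ordinary Weil $q$-polynomial once $q>1.82\ell^2$, reduce to a count over $\F_\ell^3$, and settle $\ell\in\{3,5,7,11\}$ by a finite search. Your reduction via $Y=X+q/X$ is also sound: the sextic is irreducible mod $\ell$ iff the cubic $Y^3+aY^2+(b-3q)Y+(c-2aq)$ is irreducible and $y^2-4q$ is a non-square in $\F_{\ell^3}$, equivalently $\legendre{N_{\F_{\ell^3}/\F_\ell}(y^2-4q)}{\ell}=-1$. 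But the heart of the proposition — strict positivity of the count for \emph{every} $\ell\geq 13$ — is exactly the step you do not carry out. Your lower bound rests on two unproved assertions: the count of irreducible cubics in your family with nonzero $Y^2$-coefficient, and, crucially, that ``a proportion close to $1/2$'' of them give an irreducible sextic with an error you only write as $O(\ell^2)$. That second assertion is an equidistribution statement for a quadratic character (of the resultant $\tilde P(2\sqrt q)\,\tilde P(-2\sqrt q)$, say) over the irreducible locus of a three-parameter family; a generic Weil-type bound yields an error of size $C\ell^{5/2}$ against a main term of roughly $\ell^3/6$, and at $\ell=13$ this forces $C<\sqrt{\ell}/6\approx 0.6$, so without explicit constants you get ``positive for $\ell$ large'' but not the stated range. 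Your fallback (Chebotarev over the parameter space after checking the generic Galois group contains a $6$-cycle) has the same defect: it is asymptotic unless made effective. You flag this crux yourself, so as written the proposal is a plan with a genuine gap rather than a proof.

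For comparison, the paper avoids the resolvent-cubic dichotomy and any character-sum estimate. It uses Stickelberger's parity criterion \eqref{eq:stickelberger}: irreducibility forces a non-square discriminant, and a non-square discriminant allows only $1$, $3$ or $5$ irreducible factors. It then computes \emph{exactly} the number $D_6^{*-}$ of triples with $a,c\neq 0$ and non-square discriminant, exploiting the factorisation $\Delta_{P_q}=q^6\Gamma^2\delta$ with $\delta=(c+2aq)^2-4q(b+q)^2$ a quadratic form whose Legendre-symbol statistics are elementary (Lemma~\ref{prelimun}), and bounds from above the number $R_6$ of reducible polynomials of each admissible factorisation type using exact counts of quadratic and quartic Weil factors (Proposition~\ref{weilfour}). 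The resulting explicit inequality $D_6^{*-}-R_6>0$ holds for all $\ell\geq 13$. To repair your route you would need to replace ``close to $1/2$'' by an exact or explicitly bounded computation of the relevant character sum — at which point you would, in effect, be redoing the paper's discriminant count in different coordinates.
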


\begin{rmk}\label{remirred} Computations suggest that for $\ell\in\{5,7,11\}$ and 
\emph{any} prime number $q > 1.82\ell^2$, there still exist integers $a,b,c$ such 
that Proposition~\ref{irredmodell} holds. For $\ell=3$, this is no longer true: our 
computations indicate that if $q$ is such that $\legendre q\ell=-1$, then there are no 
suitable $a,b,c$, while if $q$ is such that $\legendre q\ell=1$, they indicate that 
there are $4$ suitable triples $(a,b,c)$. \end{rmk}

We now have all the ingredients to prove Theorem~\ref{A}.

\begin{proof}[Proof of Theorem~\ref{A}.]
Let $\ell$ and $q$ be two distinct prime numbers as in Proposition~\ref{irredmodell} 
and let $P_q(X)$ be an ordinary Weil $q$-polynomial provided by this proposition.  
Since the polynomial $P_q(X)$ is irreducible modulo $\ell$, it is a fortiori irreducible over $\Z$. 
It is also ordinary and of degree $6$. Hence, by Honda-Tate theory, it defines an isogeny class $\mathcal A$ of ordinary simple abelian varieties of dimension $3$ over $\F_q$. By Proposition~\ref{abssimple}, since $a\neq 0$, the abelian varieties in $\mathcal A$ are actually absolutely simple. Moreover, according to Howe (cf.~\cite[Theorem~1.2]{Howe95}), $\mathcal A$ contains a principally polarised abelian variety $(A,\lambda)$.

Now, by the results of Oort-Ueno (cf.~\cite[Theorem~4]{OortUeno73}), there exists a so-called good curve $C$ defined over $\overline{\F}_q$ such that $(A,\lambda)$ is $\overline{\F}_q$-isomorphic to $(\mathrm{Jac}(C),\mu_0)$, where $\mu_0$ denotes the canonical polarisation on $\mathrm{Jac}(C)$.
A curve over $\overline{\F}_q$ is a \emph{good curve} if it is either irreducible and non-singular or a non-irreducible stable curve whose generalised Jacobian variety is an abelian variety (cf.~\cite[Definition (13.1)]{Howe95}). In particular, the curve $C$ is stable, and so semi-stable. Since the generalised Jacobian variety $\mathrm{Jac}(C)\cong\mathrm{Pic}^0_{C}$ is an abelian variety, the torus appearing in the short exact sequence of Proposition~\ref{BLRExactSeq} is trivial. Hence, there is an isomorphism $\mathrm{Jac}(C) \cong \prod_{i=1}^r \mathrm{Pic}^0_{\widetilde{X_i}}$, where $\widetilde{X_1},\ldots,\widetilde{X_r}$ denote the normalisations of the irreducible component of $C$ over $\overline{\F}_q$. Since $\mathrm{Jac}(C)$ is absolutely simple, we conclude that $r=1$, i.e., the curve $C$ is irreducible, hence smooth.

We can therefore apply Theorem~9 of the appendix by Serre in~\cite{Lauter01} (see also the reformulation in \cite[Theorem~1.1]{Ritzenthaler10}) and conclude that the curve $C$ descends to $\F_q$. Indeed, there exists a smooth and geometrically irreducible curve $C_q$ defined over $\F_q$ which is isomorphic to $C$ over $\overline{\F}_q$. Moreover, either $(A,\lambda)$ or a quadratic twist of $(A,\lambda)$ is isomorphic to $(\mathrm{Jac}(C_q), \mu)$ over $\F_q$, where $\mu$ denotes the canonical polarisation of $\mathrm{Jac}(C_q)$. The characteristic polynomial of $\mathrm{Jac}(C_q)$ is  $P_q(X)$ or $P_q(-X)$, since the twist may replace the Frobenius endomorphism with its negative.

Note that the polynomial $P_q(-X)$ is still an ordinary Weil polynomial which is irreducible modulo $\ell$ with non-zero trace, and $\Jac(C_q)$ is still ordinary and absolutely simple. This proves Theorem~\ref{A}.
\end{proof}

\begin{rmk}\label{sek} In the descent argument above, the existence of a non-trivial 
quadratic twist may occur in the non-hyperelliptic case only. This obstruction for an 
abelian variety over $\overline{\mathbb{F}}_q$ to be a Jacobian over $\F_q$ was first stated by Serre 
in a Harvard course~\cite{Serre85}; it was derived from a precise reformulation of Torelli's 
theorem that Serre attributes to Weil~\cite{Weil57}.
Note that Sekiguchi investigated the descent of the curve in~\cite{Sekiguchi81}  and~\cite{Sekiguchi86}, 
but, as Serre pointed out to us, the non-hyperelliptic case was incorrect. According to MathSciNet review MR1002618 
(90d:14032), together with Sekino, Sekiguchi corrected this error in~\cite{SekiSeki88}.
\end{rmk}

\section{Proof of the main theorem}\label{sec:4}

The goal of this section is to prove Theorem \ref{thm:main}, by collecting together the results
from Sections \ref{sec:2} and \ref{sec:3}. We keep the notation introduced in Subsection \ref{subsec:notation};
in particular, we will consider genus $3$ curves defined by polynomials which are of $3$-hyperelliptic or quartic type.
We will prove the following refinement of Theorem \ref{thm:main}:

\begin{thm}\label{thm:refined}
Let $\ell\geq 13$ be a prime number. For each prime number $q>1.82 \ell^2$, there exists $\bar{f}_q(x, y)\in \mathbb{F}_q[x, y]$ of $3$-hyperelliptic or
 quartic type, such that if $f(x, y)\in \mathbb{Z}[x, y]$ is a lift of  $\bar{f}_q(x, y)$, of the same type, satisfying the following two conditions for some prime number $p\not\in\{2, q, \ell\}$:
\begin{enumerate}
 \item $f(0, 0)=0$ or $v_p(f(0, 0))>2$;
 \item $f(x,y)$ is congruent modulo $p^2$ to:
$$\begin{cases} y^2 - x(x - p)m(x) &\text{ if } \bar{f}_q(x, y) \text{ is of hyperelliptic type}\\

                         x^4 + y^4 + x^2 -y^2 + px  &\text{ if } \bar{f}_q(x, y) \text{ is of quartic type}\\
                         \end{cases}$$
for some $m(x) \in \mathbb{Z}_p[x]$ of degree 5 or 6 with simple non-zero roots modulo $p$;

\end{enumerate}
\noindent then the projective  curve $C$ defined over $\mathbb{Q}$ by the equation $f(x, y)=0$ is a smooth projective geometrically irreducible
genus $3$ curve, such that the image of the Galois representation $\overline{\rho}_{\mathrm{Jac}(C), \ell}$ attached to the $\ell$-torsion of $\mathrm{Jac}(C)$
coincides with $\mathrm{GSp}_6(\mathbb{F}_{\ell})$.

Moreover, if $\ell\in \{5, 7, 11\}$, the statement is true, replacing ``For each prime number $q$'' by ``There exists an odd prime number $q$''.
\end{thm}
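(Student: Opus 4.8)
The plan is to assemble the theorem from the three ingredients that have already been prepared: the local analysis at $p$ (Propositions \ref{prop:curve} and \ref{jacobianthm}), the local analysis at $q$ (Theorem \ref{A}), and the group-theoretic surjectivity criterion Theorem 2.10 of \cite{AAKRTV14}. First I would fix $\ell \geq 13$ and a prime $q > 1.82\ell^2$ with $q \neq \ell$. Applying Theorem \ref{A}, we obtain a smooth geometrically connected genus $3$ curve $C_q/\F_q$ whose Jacobian is ordinary, absolutely simple, with Frobenius characteristic polynomial $P_q(X)$ irreducible modulo $\ell$ and of non-zero trace modulo $\ell$. By the results recalled in Subsection \ref{subsec:notation}, such a curve over the finite field $\F_q$ admits an equation $\bar{f}_q(x,y) \in \F_q[x,y]$ which is either of $3$-hyperelliptic type (if $C_q$ is hyperelliptic) or of quartic type (if $C_q$ is a plane quartic); this is the polynomial in the statement. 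One then checks that the hypotheses (1)--(2) on a lift $f(x,y) \in \Z[x,y]$, of the same type, say precisely that the polynomial is of type (H) or (Q) of Definition \ref{polyn} at the prime $p$ (with $g(x) = x(x-p)m(x) \bmod p^2$ in the hyperelliptic case), and that its reduction modulo $q$ is $\bar{f}_q(x,y)$, hence defines a curve with good reduction $C_q$ at $q$.

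Next I would verify that the curve $C$ over $\Q$ defined by $f(x,y)=0$ satisfies all hypotheses of Theorem 2.10 of \cite{AAKRTV14}. Smoothness and geometric irreducibility of $C$, together with genus $3$, follow from Proposition \ref{prop:curve} applied over $\Q_p$ (and the fact that these properties can be checked after base change to any completion). By Proposition \ref{jacobianthm}, the Néron model of $\Jac(C)$ at $p$ has semi-abelian reduction of toric rank $1$, and the component group of its geometric special fibre has order $2$. Since the reduction of $\Jac(C)$ at $q$ equals $\Jac(C_q)$, Theorem \ref{A} provides that the Frobenius at $q$ acts irreducibly (the characteristic polynomial $P_q$ is irreducible over $\Z$, being irreducible mod $\ell$) with non-zero trace $a$, and that $\Jac(C_q)$ is absolutely simple and ordinary. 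Finally, the arithmetic conditions on $\ell$ required by that theorem — namely $\ell \nmid 6pqa$, $\ell$ coprime to the order of the component group at $p$, and $P_q$ irreducible modulo $\ell$ — hold: $\ell \geq 13$ is coprime to $6$, to $p$ and $q$ by choice, to $2$ (the component group order), and $\ell \nmid a$ because $P_q$ has non-zero trace modulo $\ell$ (and $P_q$ irreducible modulo $\ell$ forces $\ell \nmid a$ anyway since reducibility would otherwise be forced by a root). Theorem 2.10 of \cite{AAKRTV14} then yields that $\overline{\rho}_{\Jac(C),\ell}$ is surjective, i.e. its image is $\GSp_6(\F_\ell)$.

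The case $\ell \in \{5,7,11\}$ is handled identically, except that one invokes the ``moreover'' clause of Theorem \ref{A} (equivalently of Proposition \ref{irredmodell}) which guarantees the existence of \emph{some} prime $q > 1.82\ell^2$ with the required properties rather than all such $q$; the rest of the argument is unchanged since we still have $\ell \geq 5$, hence $\ell$ coprime to $6$ and to the component group order $2$.

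I expect the only genuinely delicate point to be bookkeeping: checking that conditions (1)--(2) of the statement really translate into $f$ being of type (H) or (Q) in the sense of Definition \ref{polyn} (in particular that $v_p(f(0,0)) > 2$ or $f(0,0)=0$ is exactly the hypothesis on the constant term there, and that ``degree $5$ or $6$ with simple non-zero roots modulo $p$'' for $m(x)$ matches the requirement that $g(x)$ have degree $7$ or $8$ and the stated factorisation), and that reducing modulo $q$ one indeed recovers a smooth curve isomorphic to $C_q$ rather than a twist — but here we are free to choose the lift $f$ so that its reduction modulo $q$ is literally $\bar{f}_q$, so no twist issue arises at $q$. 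The remaining verifications are routine once the hypotheses of Theorem 2.10 of \cite{AAKRTV14} have been matched term by term.
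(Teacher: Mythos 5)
Your proposal is correct and takes essentially the same route as the paper: it assembles Proposition \ref{prop:curve}, Proposition \ref{jacobianthm} and Theorem \ref{A} and feeds them into the explicit surjectivity criterion of \cite{AAKRTV14} (which the paper invokes as its Theorem 3.10 rather than Theorem 2.10), exactly as in the paper's proof. One small aside is off but harmless: irreducibility of $P_q$ modulo $\ell$ does \emph{not} by itself force $\ell\nmid a$ --- the non-vanishing of the trace modulo $\ell$ is a separate condition supplied directly by Theorem \ref{A}.
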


\begin {rmk}\label{rem:CRT}
 Let $\ell\geq 5$ be a prime number.  Note that it is easy to construct infinitely many  polynomials
 $f(x,y)$ satisfying the conclusion of Theorem~\ref{thm:refined}:
 choose a polynomial $f_p(x, y)$ satisfying the conditions in Definition~\ref{polyn}. Choose a prime $q>1.82 \ell^2$, 
 and find a polynomial $\bar{f}_q(x,y)$ that satisfies the conditions in Proposition \ref{irredmodell} (e.g.~by
a computer search based on the method suggested after Theorem 0.1). Then
 it suffices to choose each coefficient of $f(x, y)$ as a lift of the corresponding coefficient of $\bar{f}_q(x,y)$
 to an element of $\mathbb{Z}$,
 which is congruent mod $p^3$ to the corresponding coefficient of $f_p(x, y)$.
 This also proves that Theorem~\ref{thm:main} follows from Theorem~\ref{thm:refined}.
\end {rmk}

\begin{exmp}\label{ex:mainthm}
\begin{enumerate}
\item For $\ell=13$, we choose $p=7$, $q=313$. A computer search produces the polynomial 
$\bar{f}_q(x,y)=y^2-(x^7+x-1)$, which defines a hyperelliptic genus $3$ curve over $\mathbb{F}_q$. 
Let $f_p(x,y)=y^2-x(x-7)(x-1)(x-2)(x-3)(x-4)(x-5)$.
Using the Chinese Remainder Theorem we construct the hyperelliptic curve over $\mathbb{Q}$ with equation $f(x,y)=0$, where
\begin{multline*}
f(x,y)=y^2 -( x^7-14085 x^6 + 33804x^5 -27231 x^4 \\ + 27231x^3 -35995 x^2 -33803x + 25039).
\end{multline*}
\item For $\ell=5$, we choose $p=3$, $q=97$. Through a computer search we find the quartic polynomial 
$\bar{f}_q(x,y)=x^4 + y^3+ x^3 y + x y^2 + 1\in\mathbb{F}_q[x, y]$. 
Take $f_p(x,y)=x^4+y^4+x^2-y^2+3x$.
Then we obtain the plane quartic curve over $\mathbb{Q}$ with equation $f(x,y)=0$, where
\begin{equation*}
f(x,y)=x^4 + 486 x^3 y + y^4 + 486 x y^2 - 485 x^2 + 485 y^2 - 1455 x + 486.
\end{equation*}

\end{enumerate}
\end{exmp}

The rest of the section is devoted to the proof of Theorem \ref{thm:refined}.
For the convenience of the reader, we recall the contents of Theorem 3.10 from \cite{AAKRTV14}:
 Let $A$ be a principally polarised $n$-dimensional abelian variety defined over $\mathbb{Q}$.
Assume that  $A$ has semistable reduction of toric rank $1$ at some prime number $p$.
 Denote by $\Phi_p$ the group of connected components of 
 the N\'eron model of $A$ at $p$.
 Let $q$ be a prime of good reduction of $A$ and
 $P_q(X)=X^{2n} + aX^{2n-1} + \cdots + q^n\in \mathbb{Z}[X]$  the characteristic polynomial of the
 Frobenius endomorphism acting on the reduction of $A$ at $q$.
 Then for all primes $\ell$ which do not divide $6pqa\vert \Phi_p\vert $
 and  such that the reduction of $P_q(X)$ mod $\ell$ is irreducible
 in $\mathbb{F}_{\ell}$, the image of $\overline{\rho}_{A, \ell}$
 coincides with $\GSp_{2n}(\mathbb{F}_{\ell})$.

\begin{proof}[Proof of Theorem \ref{thm:refined}]
Fix a prime $\ell\geq 5$.
Let $q$ and $C_{q}$ be a prime, respectively a genus $3$ curve over $\mathbb{F}_{q}$, provided by Theorem~\ref{A}.
The curve $C_q$ is
either a plane quartic or a hyperelliptic curve. More precisely, it is defined by an equation $\bar{f}_q(x, y)=0$, where
$\bar{f}_q(x, y)\in \mathbb{F}_q[x, y]$ is a quartic type polynomial in the first case and a
$3$-hyperelliptic type polynomial otherwise (cf.~Subsection  \ref{subsec:notation}).
Note that if $f(x, y)\in\mathbb{Z}[x, y]$ is a
quartic (resp.~$3$-hyperelliptic type) polynomial which reduces to $\bar{f}_q(x, y)$ modulo $q$, then
it defines a smooth projective genus $3$ curve over $\mathbb{Q}$ which is geometrically irreducible.

Let now $p\not\in \{2, q, \ell\}$ be a prime.
Assume that $f(x, y)\in \mathbb{Z}[x, y]$ is a
polynomial of the same type as $\bar{f}_q(x,y)$ which is congruent to
$\bar{f}_q(x, y)$ modulo $q$ and also satisfies the two conditions of the statement of Theorem \ref{thm:refined} for this $p$.
We claim that the curve $C$ defined over $\mathbb{Q}$
by the equation $f(x, y)=0$ satisfies all the conditions of the explicit surjectivity result of
(\cite[Theorem 3.10]{AAKRTV14}).
Namely, Proposition \ref{prop:curve} implies that $C$ is a smooth projective geometrically
connected curve of genus $3$ with stable reduction. Moreover, according to Proposition~\ref{jacobianthm},
the Jacobian $\mathrm{Jac}(C)$ is a principally polarised $3$-dimensional abelian variety over
$\mathbb{Q}$, and its N\'eron model  has semistable reduction at $p$ with toric
rank equal to $1$. Furthermore, the component group $\Phi_p$ of the 
N\'eron model of $\mathrm{Jac}(C)$ at $p$ has order $2$.
Finally, by the choice of $q$ and $C_q$ provided by Theorem~\ref{A},  $q$ is a prime of good reduction of
$\mathrm{Jac}(C)$ such that the Frobenius endomorphism of the special fibre at $q$ has Weil polynomial $P_q(X)= X^6 + a X^5 + b X^4 + c X^3 + qbX^2 + q^2aX + q^3$, which is irreducible modulo $\ell$. Since the prime $\ell$ does not divide $6pqa\vert \Phi_p\vert $,
we conclude that the image of the Galois representation $\overline{\rho}_{\mathrm{Jac}(C), \ell}$ attached to the $\ell$-torsion
of $\mathrm{Jac}(C)$ coincides with $\mathrm{GSp}_6(\mathbb{F}_{\ell})$ by
Theorem~3.10 from \cite{AAKRTV14}.
\end{proof}

\section{Counting irreducible Weil polynomials of degree~$6$}\label{sectionirred}

In this section, we will prove Proposition~\ref{irredmodell} stated  in Section~\ref{sec:3}.
At the end of the section we present some examples.

This proof is based on Proposition~\ref{boundqweil} as well as Lemmas \ref{nnsquaresix} and \ref{redsix} below.

Let $\ell$ and $q$ be distinct prime numbers. 
Consider a polynomial of the form
\begin{equation}\label{weilpolyn} P_q(X)=X^6+a X^5+bX^4+cX^3+qbX^2+q^2aX+q^3 \in\Z[X].
 \tag{$\ast$}
\end{equation}

Proposition~\ref{boundqweil} ensures that for $q\gg\ell^2$, every polynomial \eqref{weilpolyn} with  coefficients in $]-\ell,\ell[$ is a Weil polynomial.
Then Lemmas~\ref{nnsquaresix} and \ref{redsix} allow us to show that the number of such polynomials which are irreducible modulo $\ell$  is strictly positive.

\begin{pr}\label{boundqweil}Let $\ell$ and $q$ be two prime numbers.
\begin{enumerate}
\item Suppose that $q>1.67\ell^2$. Then every polynomial $$X^4+uX^3+vX^2+uqX+q^2 \in \Z[X]$$ with integers $u,v$ of absolute value $<\ell$ is a Weil $q$-polynomial.
\item   Suppose that $q>1.82\ell^2$. Then every polynomial
\begin{equation*}
P_q(X)=X^6+a X^5+bX^4+cX^3+qbX^2+q^2aX+q^3 \in\Z[X],
\end{equation*}
with  integers $a,b,c$ of absolute value $<\ell$, is a Weil $q$-polynomial.
\end{enumerate}
\end{pr}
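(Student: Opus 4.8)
The plan is to characterise, in terms of the coefficients, when a polynomial of the given palindromic shape has all complex roots on the circle $|z|=\sqrt q$, and then to verify that this condition is met automatically once $q$ is large compared to $\ell^2$. The key observation is that a degree $2g$ polynomial $P_q(X)$ of the relevant symmetric form is a Weil $q$-polynomial if and only if the associated ``real'' polynomial obtained via the substitution $X + q/X = Y$ (so $Y = 2\sqrt q\cos\theta$ when $X = \sqrt q\,e^{i\theta}$) has all its roots real and lying in the interval $[-2\sqrt q, 2\sqrt q]$. For $g=2$ this real polynomial is the quadratic $Y^2 + uY + (v - 2q)$, and for $g=3$ it is the cubic $Y^3 + aY^2 + (b - 3q)Y + (c - 2aq)$. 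So in each case I reduce the claim to: (i) this auxiliary polynomial has only real roots, and (ii) each real root lies in $[-2\sqrt q, 2\sqrt q]$.

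First I would treat case (1), the quadratic, as a warm-up: the discriminant is $u^2 - 4(v-2q) = u^2 - 4v + 8q$, which is positive once $8q > 4\ell^2 > 4v - u^2$, so both roots are real; and since the roots multiply to $v - 2q < 0$ and sum to $-u$ with $|u| < \ell$, a direct estimate shows $|Y| < 2\sqrt q$ provided $q$ is large enough — one checks that $q > 1.67\ell^2$ suffices by bounding the larger root $\tfrac{1}{2}(|u| + \sqrt{u^2 - 4v + 8q})$ against $2\sqrt q$, i.e. requiring $\sqrt{u^2 - 4v + 8q} < 4\sqrt q - |u|$, which after squaring becomes a linear inequality in $q$ that holds for $|u|,|v|<\ell$ and $q>1.67\ell^2$. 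Then for case (2), the cubic $h(Y) = Y^3 + aY^2 + (b-3q)Y + (c - 2aq)$, I would (a) show $h$ has three real roots by exhibiting sign changes: evaluate $h$ at $Y = -2\sqrt q,\ 0,\ 2\sqrt q$ (or at suitable nearby points) and check that the dominant terms $\mp 8q^{3/2}$ at $\pm 2\sqrt q$ force $h(-2\sqrt q) < 0 < $ ... wait, more carefully, $h(2\sqrt q) = 8q^{3/2} + 4aq + (b-3q)2\sqrt q + c - 2aq = 8q^{3/2} - 6q^{3/2} + O(q) = 2q^{3/2} + O(q) > 0$ and $h(-2\sqrt q) = -2q^{3/2} + O(q) < 0$, and similarly locating a sign change near $Y=0$ using the constant and linear terms; and (b) conclude from the same sign computations at $\pm 2\sqrt q$ that all three real roots lie strictly inside $[-2\sqrt q, 2\sqrt q]$, since $h$ is positive at the right endpoint, negative at the left endpoint, and $h \to \pm\infty$ as $Y \to \pm\infty$. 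The constant $1.82$ rather than something smaller is presumably what is needed to make the $O(q)$ error terms (with coefficients bounded by $\ell$) genuinely dominated by the $2q^{3/2}$ main term together with the interlacing sign pattern, uniformly over $|a|,|b|,|c| < \ell$.

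The main obstacle will be the bookkeeping in part (b) of case (2): one must verify not merely that $h$ changes sign enough times to have three real roots, but that the sign pattern at the four test points $-\infty, -2\sqrt q, 2\sqrt q, +\infty$ is exactly $(-,-,+,+)$ — i.e. that $h(-2\sqrt q)<0$ and $h(2\sqrt q)>0$ — which forces one root in $(-2\sqrt q, 2\sqrt q)$, and then a separate argument (e.g. via the derivative $h'$, or via the explicit location of a third sign change strictly between $-2\sqrt q$ and $2\sqrt q$, or simply by noting that a real cubic with $h(-2\sqrt q)<0<h(2\sqrt q)$ and negative leading behavior to the left cannot have its other two real roots outside the interval without violating these signs) pins down the remaining two roots. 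Getting all the inequalities to close with the explicit constant $1.82$, uniformly in the coefficients, is the delicate point; I would isolate the worst case (extremal choices of signs of $a,b,c$) and reduce to a single inequality of the form $2q^{3/2} > (\text{const})\cdot \ell\, q$, i.e. $q > (\text{const})^2\ell^2/4$, tuning the constants so the threshold comes out to $1.82\ell^2$ (and $1.67\ell^2$ in the quadratic case).
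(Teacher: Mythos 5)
Your reduction is sound: for these palindromic polynomials one has $P_q(X)=X^g\,h(X+q/X)$ with $h(Y)=Y^2+uY+(v-2q)$, resp.\ $h(Y)=Y^3+aY^2+(b-3q)Y+(c-2aq)$, and $P_q$ is a Weil $q$-polynomial if and only if all roots of $h$ are real and lie in $[-2\sqrt q,2\sqrt q]$. This is a genuinely different route from the paper, which does not re-derive such a criterion but instead quotes explicit coefficient inequalities characterising Weil polynomials (a known degree-$4$ criterion, and Haloui's Theorem~1.1 for degree $6$) and then verifies those inequalities under $|u|,|v|<\ell$, resp.\ $|a|,|b|,|c|<\ell$; your method is more self-contained, at the price of having to control the roots of the cubic by hand. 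Your quadratic case is complete and correct.

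In the cubic case, however, the decisive step is not actually carried out, and the one concrete justification you offer is false: knowing only $h(-2\sqrt q)<0<h(2\sqrt q)$ does \emph{not} pin down the remaining two roots. With those endpoint signs the cubic could still have a complex-conjugate pair, or two real roots both larger than $2\sqrt q$ (then $2\sqrt q$ lies left of the dip between the second and third roots, so $h(2\sqrt q)>0$ is not violated), or symmetrically two roots below $-2\sqrt q$. Likewise, ``a sign change near $Y=0$'' is not available uniformly in the coefficients: $h(0)=c-2aq$, and when $a=0$ this equals $c$, which may have either sign or vanish, while the middle root sits within $O(\ell/q)$ of $0$. The clean repair inside your framework is to take the four test points $\pm2\sqrt q$ and $\pm\sqrt q$: one computes $h(\pm2\sqrt q)=\pm2q^{3/2}+2aq\pm2b\sqrt q+c$ and $h(\pm\sqrt q)=\mp2q^{3/2}-aq\pm b\sqrt q+c$, so the sign pattern is $(-,+,-,+)$ as soon as $2q^{3/2}>2|a|q+2|b|\sqrt q+|c|$, which holds comfortably for $q>1.82\ell^2$ (for $\ell=2$ use $|a|,|b|,|c|\le\ell-1$). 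This forces three real roots, one in each of $(-2\sqrt q,-\sqrt q)$, $(-\sqrt q,\sqrt q)$, $(\sqrt q,2\sqrt q)$, hence all in the required interval; with that insertion your argument closes.
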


\begin{rmk} The power in $\ell$ is optimal, but the constants $1.67$ and $1.82$ are not. 
\end{rmk}

Let $D_6^{*-}$  be the number of polynomials of the form $P_q(X)=X^6+a X^5+bX^4+cX^3+qbX^2+q^2aX+q^3 \in\Z[X]$  with $a,b,c$ in  $[-(\ell-1)/2,(\ell-1)/2]$,  
$a,c\neq 0$ and  whose discriminant $\Delta_{P_q}$ is not a square modulo $\ell$, and $R_6$ 
the number of such polynomials which are Weil polynomials and are reducible modulo $\ell$.
Denoting by $\legendre{.}{\ell}$ the Legendre symbol, we have:
\begin{lm}\label{nnsquaresix} Let $\ell>3,$  then
$D_6^{*-}\geq \frac 12 (\ell-1)^2\left(\ell-1-\legendre q\ell\right) +\frac 12 (\ell-1)\legendre q\ell\left(1-\legendre{-1}\ell\right)-\ell(\ell-1).$
\end{lm}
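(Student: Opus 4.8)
\textbf{Proof plan for Lemma~\ref{nnsquaresix}.} The plan is to count, among the $(\ell-1)^2\cdot\ell$ polynomials $P_q(X)$ of the shape \eqref{weilpolyn} with $a,c\in[-(\ell-1)/2,(\ell-1)/2]\setminus\{0\}$ and $b\in[-(\ell-1)/2,(\ell-1)/2]$, those for which $\Delta_{P_q}$ is a \emph{non-square} modulo $\ell$, and to bound this from below. The natural strategy is to compute the discriminant $\Delta_{P_q}$ modulo $\ell$ as a polynomial in $a,b,c$, and then, for fixed $a$ and $b$, view $\legendre{\Delta_{P_q}}{\ell}$ as a function of $c$ and sum it over $c\in\F_\ell$. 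Since $P_q$ has even degree, its discriminant is (up to sign, which we must track carefully) a polynomial in $c$; I would first argue that for \emph{generic} $a,b$ the discriminant, as a polynomial in $c$, has a controlled degree and is not a perfect square (as a polynomial), so that a Weil-type character sum estimate applies.

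First I would reduce the problem modulo $\ell$ and substitute $q\bmod\ell$; write $\bar q$ for the residue. A key simplification is the classical fact that the discriminant of a ``$q$-symmetric'' degree $6$ polynomial of the form \eqref{weilpolyn} factors through the ``trace polynomial'': if $\theta$ runs over the roots, then $\theta+\bar q/\theta$ runs over the roots of a cubic $h(Y)=Y^3+aY^2+(b-3\bar q)Y+(c-2a\bar q)$, and one has an identity of the form $\Delta_{P_q}\equiv \bar q^{3}\,\Delta_h\cdot N^2 \pmod\ell$ (up to an explicit power of $\bar q$ and a square factor $N$ coming from the resultant $\prod(\theta_i-\bar q/\theta_i)$, i.e.\ essentially $h(2\sqrt{\bar q})h(-2\sqrt{\bar q})$ or a norm thereof). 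I would make this identity precise, so that $\legendre{\Delta_{P_q}}{\ell}=\legendre{\bar q}{\ell}\legendre{\Delta_h}{\ell}$ whenever the square factor is nonzero modulo $\ell$. This turns a degree-$6$ discriminant computation into the much more tractable discriminant of a cubic $h(Y)=Y^3+aY^2+(b-3\bar q)Y+(c-2a\bar q)$, whose discriminant $\Delta_h$ is, for fixed $a,b$, a quadratic polynomial in $c$ with leading coefficient $-27$.

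Next, for fixed $a$ and $b$, I would sum $\legendre{\Delta_h}{\ell}$ over $c\in\F_\ell$. Since $\Delta_h=-27c^2+(\text{linear in }c)$ is a genuine quadratic in $c$ (as $\ell>3$), the standard formula for quadratic character sums gives $\sum_{c\in\F_\ell}\legendre{\Delta_h}{\ell}=-\legendre{-27}{\ell}=-\legendre{-3}{\ell}$ when $\Delta_h$ has distinct roots in $c$, and $=(\ell-1)\legendre{\text{lead}}{\ell}$ in the degenerate case. Hence $\sum_c\legendre{\Delta_{P_q}}{\ell}$ is bounded in absolute value by a small constant for each $(a,b)$, which after excluding the $c=0$ term and the bad $a$-values gives that the number of $(a,b,c)$ in the full box with $\Delta_{P_q}$ a non-square is $\tfrac12(\ell-1)^2\bigl((\ell-1)-\legendre{q}{\ell}\bigr)$ plus lower-order corrections; tracking the degenerate quadratics (those $(a,b)$ for which $\Delta_h$ is a perfect square in $c$, forcing a $(\ell-1)$-sized character sum) produces exactly the term $\tfrac12(\ell-1)\legendre{q}{\ell}\bigl(1-\legendre{-1}{\ell}\bigr)$, and restricting $c\neq 0$ and discarding the (at most $\ell-1$) pairs where the square factor $N$ vanishes modulo $\ell$ accounts for the $-\ell(\ell-1)$ error term.

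\textbf{Main obstacle.} I expect the genuinely delicate point to be the bookkeeping in the reduction of $\Delta_{P_q}$ to $\Delta_h$: getting the exact power of $\bar q$ and correctly identifying the square factor $N$, and then pinning down precisely which pairs $(a,b)$ make $N\equiv 0$ or make $\Delta_h$ a square as a polynomial in $c$ — these are the cases that generate the two correction terms and the $-\ell(\ell-1)$ slack in the stated inequality. The character-sum estimates themselves are routine once the algebraic identity is in hand; the art is in choosing the error bars generously enough (hence the $-\ell(\ell-1)$) that one need not analyze the degenerate loci exactly, only bound their size by $O(\ell)$.
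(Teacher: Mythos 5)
Your overall strategy --- factor $\Delta_{P_q}$ into a square factor times a factor quadratic in $c$, count via Legendre-symbol sums, and discard the locus where the square factor vanishes --- is the same as the paper's, but your key identity has the two factors interchanged, and this is a genuine gap rather than a bookkeeping issue. Writing $h(Y)=Y^3+aY^2+(b-3q)Y+(c-2aq)$ for the trace cubic, the correct identity is $\Delta_{P_q}=q^6\,\Delta_h^2\,\delta$ with $\delta=h(2\sqrt q)\,h(-2\sqrt q)=(c+2aq)^2-4q(b+q)^2$: the cubic discriminant enters \emph{squared} (the paper's $\Gamma$ is exactly $\Delta_h$), while the ``norm'' factor $\delta$ enters to the first power --- it is not the square of a rational quantity, since $\prod_i(\theta_i-q/\theta_i)$ is only Galois-stable up to sign. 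Hence $\legendre{\Delta_{P_q}}{\ell}=\legendre{\delta}{\ell}$ off the locus $\Delta_h\equiv 0\pmod \ell$, and not $\legendre{q}{\ell}\legendre{\Delta_h}{\ell}$ as you assert. Your plan to sum $\legendre{\Delta_h}{\ell}$ over $c$ therefore computes the character of the wrong factor; the correct main term comes from counting $(a,b,c)$ with $\legendre{(c+2aq)^2-4q(b+q)^2}{\ell}=-1$, i.e.\ from the norm form $x^2-4qy^2$ after the substitution $(a,b)\mapsto(c+2aq,\,b+q)$, and this is exactly where the $\legendre{q}{\ell}$-dependence of the main term $\frac 12(\ell-1)^2\left(\ell-1-\legendre q\ell\right)$ originates. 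A character sum of a quadratic in $c$ with leading coefficient $-27$, as in your plan, cannot reproduce that structure.

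Two further consequences of the swap would derail the bookkeeping even if patched locally. First, the secondary term $\frac 12(\ell-1)\legendre q\ell\left(1-\legendre{-1}\ell\right)$ does not come from pairs $(a,b)$ making a quadratic in $c$ degenerate; in the paper it arises from excluding $a=0$ (the correction $M'_c$, counting $y$ with $\legendre{c^2-4qy^2}{\ell}=-1$ for each fixed $c\neq 0$). Second, the locus where the square factor $\Gamma=\Delta_h$ vanishes has size $W=\ell(\ell-1)$ --- it is of codimension one in $(a,b,c)$-space, computed in the paper via the discriminant $16\bigl(a^2-3(b-3q)\bigr)^3$ of $\Gamma$ viewed as a quadratic in $c$ --- not the $O(\ell)$ pairs you allot; it is precisely this count that produces the $-\ell(\ell-1)$ term in the statement. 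So the route is repairable, but only by exchanging the roles of $\Delta_h$ and $\delta$ throughout, at which point it coincides with the paper's proof.
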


\begin{lm}\label{redsix}
Let $\ell>3,$ then $R_6\leq \frac 38\ell^3 - \frac 5 8\ell^2\legendre q\ell - \ell^2 + \frac 32\ell\legendre q\ell + \frac 58\ell - \frac 38\legendre q\ell - \frac 12$.
\end{lm}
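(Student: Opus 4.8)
The plan is to bound $R_6$ by counting the polynomials $P_q(X)$ of the form $(\ast)$, with $a,b,c\in[-(\ell-1)/2,(\ell-1)/2]$ and $a,c\neq 0$, that become reducible after reduction modulo $\ell$. Write $\overline{P_q}\in\mathbb F_\ell[X]$ for the reduction. The key structural fact I would use is the functional equation: a Weil $q$-polynomial of degree $6$ satisfies $X^6P_q(q/X)=q^3P_q(X)$, and this symmetry persists modulo $\ell$ because $q\bar q\equiv 1\pmod\ell$ (here $\bar q$ is the inverse of $q$ mod $\ell$), i.e. $X^6\overline{P_q}(\bar q/X)=\bar q^{\,3}\overline{P_q}(X)$ up to the unit $\bar q^3$. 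Hence if $\alpha\in\overline{\mathbb F_\ell}$ is a root of $\overline{P_q}$, so is $q/\alpha$. This $\alpha\mapsto q/\alpha$ involution on the six roots forces any factorization of $\overline{P_q}$ into irreducibles over $\mathbb F_\ell$ to be compatible with the involution; since $\overline{P_q}$ has degree $6$ and the involution has no fixed points unless $q$ is a square mod $\ell$ (the fixed points being $\pm\sqrt q$), the possible factorization types of a \emph{reducible} $\overline{P_q}$ are severely constrained. I would enumerate them: essentially (i) a $q$-symmetric quadratic factor times a $q$-symmetric quartic, (ii) three $q$-symmetric quadratic factors, (iii) a linear times a linear times a $q$-symmetric quartic when $q\equiv\square$, with the two linear factors being $X-\sqrt q$ and $X+\sqrt q$, (iv) $(X^2-q)$ times a $q$-symmetric quartic, and a couple more degenerate combinations. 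Each ``$q$-symmetric quadratic factor'' has the shape $X^2+uX+q$ (mod $\ell$) and is pinned down by a single parameter $u\in\mathbb F_\ell$; a ``$q$-symmetric quartic'' has the shape $X^4+sX^3+tX^2+qsX+q^2$, pinned down by two parameters $s,t\in\mathbb F_\ell$.

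The heart of the count is then: for each admissible factorization type, the number of reducible $\overline{P_q}$ of that type is at most the number of ways to choose the parameters of the symmetric factors. A $q$-symmetric quartic contributes $\le\ell^2$ choices, a $q$-symmetric quadratic $\le\ell$ choices, and the $\{X-\sqrt q,X+\sqrt q\}$ option exists only when $\legendre q\ell=1$. Multiplying out and summing over the (finitely many) types, one gets an upper bound for the number of reducible residues $\overline{P_q}\in\mathbb F_\ell[X]$ of the required shape. I would organize this so that the dominant term $\tfrac38\ell^3$ comes from counting products of three symmetric quadratics: there are roughly $\binom{\ell}{3}\sim\ell^3/6$ unordered triples but one must add back the cases with repeated quadratic factors, and a careful bookkeeping of ordered-versus-unordered and of the overlap with the ``one quadratic times one quartic'' count yields the coefficient $\tfrac38$; the $\legendre q\ell$-dependent corrections $-\tfrac58\ell^2\legendre q\ell$ etc. come from the fact that when $\legendre q\ell=-1$ the factor $X^2-q$ is itself irreducible (so fewer factorization types are available), while when $\legendre q\ell=1$ the extra linear-factor types appear. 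Finally I would translate from ``residues $\overline{P_q}$'' back to ``polynomials $P_q$ with coefficients in $[-(\ell-1)/2,(\ell-1)/2]$'': since $a,b,c$ range over a complete set of residues mod $\ell$ and the map $(a,b,c)\mapsto(\bar a,\bar b,\bar c)$ is a bijection onto $\mathbb F_\ell^3$, counting reducible residues is the same as counting reducible $P_q$, except that we have imposed $a\neq 0$ and $c\neq 0$, which only \emph{decreases} the count, so the bound on residues is a valid upper bound for $R_6$. (The Weil-polynomial restriction likewise only decreases the count, so it may be dropped for the upper bound.)

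The main obstacle I expect is the combinatorial bookkeeping in the dominant term: getting the exact coefficient $\tfrac38$ (rather than a cruder bound like $\tfrac16\ell^3$ or $\tfrac12\ell^3$) requires carefully handling which factorization types overlap — e.g. a product of three equal symmetric quadratics is also counted as ``one symmetric quadratic times one symmetric quartic'' where the quartic happens to factor — and making sure every reducible $\overline{P_q}$ is counted \emph{at least once} (for an upper bound we are free to overcount, so strictly I only need an over-count, which makes the $\tfrac38$ plausible without inclusion–exclusion). A secondary subtlety is verifying that a $q$-symmetric quartic $X^4+sX^3+tX^2+qsX+q^2$ with prescribed $s,t$ indeed always lifts to exactly one residue-triple $(\bar a,\bar b,\bar c)$ when multiplied by a given symmetric quadratic or linear factor — this is a direct computation expanding the product and matching coefficients, using that the product of two $q$-symmetric polynomials is again $q$-symmetric of the expected degree. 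Once the type-by-type bound is assembled and simplified, comparing with the target expression $\tfrac38\ell^3-\tfrac58\ell^2\legendre q\ell-\ell^2+\tfrac32\ell\legendre q\ell+\tfrac58\ell-\tfrac38\legendre q\ell-\tfrac12$ should be a matter of collecting terms according to the value of $\legendre q\ell\in\{1,-1\}$.
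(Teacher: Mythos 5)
There is a genuine gap: you have dropped the condition, built into the definition of $R_6$, that the discriminant of $P_q$ is \emph{not} a square modulo $\ell$, and with it the key tool of the paper's proof, Stickelberger's theorem (equation \eqref{eq:stickelberger}). That condition is what forces the reduction mod $\ell$ to be separable with an \emph{odd} number ($3$ or $5$) of distinct irreducible factors, and hence restricts the factorisation types to exactly three shapes: (four distinct roots in $\F_\ell$) $\times$ (irreducible quadratic), (two roots in $\F_\ell$) $\times$ (irreducible quartic), and a product of three distinct irreducible quadratics. Your plan instead proposes to over-count \emph{all} reducible symmetric sextics compatible with the involution $\alpha\mapsto q/\alpha$, relying on the remark that ``for an upper bound we are free to overcount''. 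But that over-count is too large to ever reach the stated bound: already your type (i) alone (symmetric quadratic times symmetric quartic) has up to $\ell\cdot\ell^2=\ell^3$ parameter choices, and the true number of reducible polynomials of the shape $(\ast)$ mod $\ell$ is about $\tfrac56\ell^3$ (roughly $\ell^3/6$ of the $\ell^3$ symmetric sextics are irreducible), both of which exceed $\tfrac38\ell^3+O(\ell^2)$. So no amount of bookkeeping of overlaps can produce the coefficient $\tfrac38$ without reinstating the non-square-discriminant restriction.

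The second missing ingredient is quantitative: the coefficient $\tfrac38$ and the $\legendre q\ell$-terms do not come from counting raw parameters of symmetric factors ($\le\ell$ per quadratic, $\le\ell^2$ per quartic), but from the finer counts of Proposition~\ref{weilfour} and Lemma~\ref{prelimun}: the number of irreducible symmetric quadratics $X^2-CX+q$ is $\tfrac12\bigl(\ell-\legendre q\ell\bigr)$, the number of split ones without double root is $\tfrac12\bigl(\ell-2-\legendre q\ell\bigr)$, and the degree-$4$ counts satisfy $N_4\le\tfrac14(\ell+1)(\ell-1)$ and $T_4\le\tfrac14(\ell-3)\bigl(\ell-\legendre q\ell\bigr)+\tfrac18(\ell-1)(\ell+1)$ (again obtained via Stickelberger in degree $4$). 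Summing the three admissible types with these counts gives $\tfrac1{16}\ell^3+\tfrac18\ell^3+\bigl(\tfrac18+\tfrac1{16}\bigr)\ell^3=\tfrac38\ell^3$ plus the lower-order terms of the statement. To repair your argument you would need to (a) restrict from the start to polynomials with $\legendre{\Delta_{P_q}}\ell=-1$ and invoke Stickelberger to limit the factorisation types, and (b) replace the crude ``$\le\ell$ and $\le\ell^2$ choices per factor'' by the exact quadratic counts and the degree-$4$ bounds above; at that point your argument essentially becomes the paper's.
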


We postpone the proofs of Proposition~\ref{boundqweil} as well as Lemmas \ref{nnsquaresix} and \ref{redsix} 
to the following subsections but now use those statements to prove Proposition~\ref{irredmodell}.
Before that, let us recall a result of Stickelberger, as proven by Carlitz in \cite{Carlitz}, which will also 
be useful for proving Lemmas~\ref{nnsquaresix} and~\ref{redsix}: For  any monic polynomial $P(X)$ of degree $n$ 
with coefficients in $\Z$, and   any odd prime number $\ell$ not dividing its discriminant $\Delta_P$,  the number 
$s$ of irreducible factors of $P(X)$ modulo $\ell$ satisfies
\begin{equation}\label{eq:stickelberger}
\left( \frac{\Delta_P}{\ell} \right) = (-1)^{n-s}.
\end{equation}

\begin{proof}[Proof of Proposition~\ref{irredmodell}]
Let $\ell>3$ be a prime number. It follows from Stickelberger's result that  if $P_q(X)$ as in \eqref{weilpolyn} is  irreducible modulo $\ell$, 
then $\left( \frac{\Delta_{P_q}}{\ell} \right) = -1.$ Hence by Proposition~\ref{boundqweil}, when $q>1.82\ell^2$, we find that $(D^{*-}_6-R_6)$ is 
exactly the number of  degree $6$ ordinary Weil polynomials which have non-zero trace modulo $\ell$ and are irreducible modulo $\ell$.

By Lemmas~\ref{nnsquaresix} and \ref{redsix}, we have
$$ D_6^{*-}-R_6\geq \frac 18 \ell^3 + \frac 18 \ell^2\legendre{q}\ell - \frac 12\ell\legendre{-q}\ell - \frac 32\ell^2 + \frac 12\legendre{-q}\ell+ \frac{15}8\ell - \frac 58\legendre q\ell,$$ which is strictly positive  for all $q$, provided that $\ell\geq 13.$

For $\ell=3,5,7$ or $11$, direct computations of $(D_6^{*-}-R_6)$ using \textsc{Sage} show that $q=19$ for $\ell=3$, $q= 47$ for  $\ell = 5,\; q=97 $ for $\ell=7, \; q=223$ for $\ell=11$ will answer to the conditions of Proposition~\ref{irredmodell}.  Actually, computations indicate that for $\ell=5,7,11,$  $(D_6^{*-}-R_6)$  should be strictly positive  for any prime number $q$ and  for $\ell=3$,  it should be strictly positive  for  all prime numbers $ q$  which are not squares modulo $\ell$  (see  Remark~\ref{remirred}).
\end{proof}

\subsection{Proof of Proposition~\ref{boundqweil}}

Recall that $\ell$ and $q$ are two prime numbers.

We first consider degree $4$ polynomials. One can prove that a polynomial $X^4+uX^3+vX^2+uqX+q^2 \in \Z[X]$ is a $q$-Weil polynomial  if and only if the integers $u,v$ satisfy the following inequalities:

\begin{enumerate}[(1)]
\item\label{ineq1} $|u|\leq 4\sqrt{q}$, 
\item\label{ineq2} $2|u|\sqrt q-2q\leq v\leq \frac{u^2}4+2q$.

\end{enumerate}

\medskip

Let $q>1.67\ell^2 $ and $Q(X)=X^4+uX^3+vX^2+uqX+q^2 \in \Z[X]$ with $|u|<\ell,|v|<\ell$. Then $q \geq \frac 1{16}\ell^2$  and, 
since  $\ell\geq 2$, we have $q\geq \frac 14\ell^2\geq \frac12\ell$ so \eqref{ineq1} and  the right hand side inequality in \eqref{ineq2} are  satisfied. 
Finally, $q\geq \left(1+\frac{1}{2\sqrt 3}\right)^2\ell^2$ so $\sqrt q\geq \left(1+\frac 1{2\sqrt q}\right)\ell$ 
and the left hand side inequality in \eqref{ineq2} is satisfied. This proves that $Q(X)$ is a Weil polynomial and the first part of the proposition.

\medskip
Now we turn to degree $6$ polynomials. The proof is similar to the degree $4$ case.
According to Haloui \cite[Theorem~1.1]{Haloui10}, a  degree $6$ polynomial of the form \eqref{weilpolyn}
is a Weil polynomial if its coefficients satisfy the following inequalities:
\begin{enumerate}[(1)]
\item\label{condhaloui1} $|a|<6\sqrt q$, 
\item\label{condhaloui2} $4\sqrt q |a|-9q<b\leq \frac{a^2}3+3q$,
\item\label{condhaloui3} $-\frac{2a^3}{27}+\frac{ab}3+qa-\frac{2}{27}(a^2-3b^2+9q)^{\frac 32} \leq c \leq -\frac{2a^3}{27}+\frac{ab}3+qa + \frac{2}{27}(a^2-3b^2+9q)^{\frac 32} $,
\item\label{condhaloui4} $-2qa-2\sqrt q b-2q\sqrt q <c<-2qa+2\sqrt qb+2q\sqrt q$.
\end{enumerate}

\medskip
Let $q>1.82\ell^2$ and $P_q(X)$ a polynomial of the form \eqref{weilpolyn} with $|a|,|b|,|c|<\ell$. Then we note:
 \begin{itemize}

\item We have $q>\frac{1}{36}\ell^2$, so $\ell < 6\sqrt q$ and  \eqref{condhaloui1} is satisfied.
\item
The right hand side inequality of \eqref{condhaloui2} is satisfied since $\ell\leq 3q$.
 Moreover we have
$q>(1+\sqrt{17/8})\ell^2 \geq 4\ell^2(1+\sqrt{1+9/4\ell})^2/81$. Hence $9q-4\ell\sqrt q-\ell>0$ and the left hand inequality of  \eqref{condhaloui2} is satisfied.

\item A sufficient condition to have both inequalities in  \eqref{condhaloui3} is
 $$
2\ell^3+9\ell^2+27q\ell-2(-3\ell^2+9q)^{3/2}+27\ell\leq 0. $$
A computation shows that this inequality is equivalent to  $A\leq B$, with
\begin{align*}
A=\ell^6\left(\frac{28}{729}+\frac{1}{81\ell}+\frac {7}{108\ell^2}+\frac  1{6\ell^3}+\frac1{4\ell^4}\right) \mbox{ and } 
B= q^3\left(1-\frac54\frac{\ell^2}{q}+\frac{\ell^4}{q^2}\left(\frac 8{27}-\frac 1{6\ell}-\frac{1}{2\ell^2}\right)\right).
\end{align*}
 Since $\ell\geq 2$, we have $A\leq \frac{4537}{46656} \ell^6$
 and  $B\geq q^3\left(1-\frac 54\frac{\ell^2}{q}+\frac{19}{216}\frac{\ell^4}{q^2}\right)$.
Furthermore, since the polynomial
$$\frac{4537}{46656}X^3-\frac{19}{216}X^2+\frac54 X-1$$
has only one real root with approximate value $0.805$, we find that $A\leq B$,  because  $q\geq 1.243 \ell^2.$

\item Since $q>1.82\ell^2$ and $\ell\geq 2$, we have
$
\ell \left(\frac{1}{2q}+\frac{1}{\sqrt q}+1\right)\leq \ell\left(\frac 1{22}+\frac 1{\sqrt{11}}+1\right) <\sqrt q   $. 
Hence, $-2q\ell - 2\sqrt q \ell + 2q\sqrt q-\ell >0  $ and \eqref{condhaloui4} is satisfied.
 \end{itemize}
This proves that $P_q(X)$ is a Weil polynomial and the second part of the proposition. \hfill$\qed$

\subsection{Proofs of  Lemmas~\ref{nnsquaresix} and \ref{redsix}}

In this section, $\ell>2$, $q\neq \ell$ are prime numbers and we, somewhat abusively, denote with the same letter an integer in  $[-(\ell-1)/2,(\ell-1)/2]$ and its image in $\F_\ell$.

We will repeatedly use the following elementary lemma.

\begin{lm}\label{prelimun} Let $D\in\F_\ell^*$ and  $\varepsilon\in\{-1,1\}.$ We have
$$
\sharp \left\{x\in\F_\ell ;\legendre{x^2-D}{\ell} =\varepsilon\right\}=\frac 12\left(\ell-1-\varepsilon -\legendre D\ell\right);
$$
and $$\sharp\left\{(x,y)\in\F_\ell^2; \legendre{x^2-Dy^2}\ell=\varepsilon\right\}=\frac{1}{2}(\ell-1)\left(\ell-\legendre D\ell\right).$$
\end{lm}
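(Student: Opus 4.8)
\textbf{Plan of proof for Lemma~\ref{prelimun}.}

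The plan is to count, for fixed $D\in\F_\ell^*$ and $\varepsilon\in\{-1,1\}$, the solutions in $\F_\ell$ (resp.\ $\F_\ell^2$) to the stated conditions on Legendre symbols, by exploiting the fact that $\legendre{\cdot}\ell$ takes the value $1$ on exactly $(\ell-1)/2$ nonzero squares and $-1$ on the other $(\ell-1)/2$ nonzero elements, and that it vanishes only at $0$. For the first formula, I would introduce the change of variable and write $N_\varepsilon=\sharp\{x\in\F_\ell: \legendre{x^2-D}\ell=\varepsilon\}$; since $x^2-D=0$ has exactly $1+\legendre D\ell$ solutions, we have $N_1+N_{-1}=\ell-1-\legendre D\ell$. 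To pin down the individual values I would compute the character sum $\sum_{x\in\F_\ell}\legendre{x^2-D}\ell=N_1-N_{-1}$, which by the classical evaluation of a quadratic character on a quadratic polynomial (Jacobi sums, or equivalently counting points on the conic $y^2=x^2-D$) equals $-1$ when $D\neq 0$. Solving the two linear equations gives $N_\varepsilon=\tfrac12(\ell-1-\varepsilon-\legendre D\ell)$, as claimed.

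For the second formula I would reduce to the first by homogeneity. Split the count according to whether $y=0$ or $y\in\F_\ell^*$. The contribution from $y=0$ is $\sharp\{x:\legendre{x^2}\ell=\varepsilon\}$, which is $\ell-1$ if $\varepsilon=1$ and $0$ if $\varepsilon=-1$. For each fixed $y\in\F_\ell^*$, substituting $x=yt$ gives $\legendre{x^2-Dy^2}\ell=\legendre{y^2}\ell\legendre{t^2-D}\ell=\legendre{t^2-D}\ell$, so the number of valid $x$ equals $N_\varepsilon=\tfrac12(\ell-1-\varepsilon-\legendre D\ell)$ by the first part, independently of $y$. Summing over the $\ell-1$ choices of $y\in\F_\ell^*$ and adding the $y=0$ contribution yields
\[
(\ell-1)\cdot\tfrac12\bigl(\ell-1-\varepsilon-\legendre D\ell\bigr)+\tfrac{1+\varepsilon}{2}(\ell-1)
=\tfrac12(\ell-1)\bigl(\ell-\legendre D\ell\bigr),
\]
which is the asserted value and is in particular independent of $\varepsilon$ (a sanity check: the two values for $\varepsilon=\pm1$ must sum to the total number of pairs with $x^2-Dy^2\neq 0$, namely $\ell^2-(1+\legendre D\ell)(\ell-1)$ up to the easy count of the conic, which matches).

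The only genuine input, and hence the main point to get right, is the evaluation $\sum_{x\in\F_\ell}\legendre{x^2-D}\ell=-1$ for $D\neq 0$; everything else is bookkeeping. I would either cite the standard fact that $\sum_{x}\legendre{x^2+bx+c}\ell=-1$ whenever the discriminant $b^2-4c$ is a nonzero square-free... more precisely a nonzero element (the sum is $\ell-1$ if $b^2-4c\equiv 0$ and $-1$ otherwise), or give the two-line argument: completing the square reduces to $\sum_x\legendre{x^2-D}\ell$, and pairing $x$ with $-x$ together with the substitution $x\mapsto Dx$ (legitimate since $D\neq 0$) shows this sum depends only on whether $D$ is a square; computing it for one square value of $D$ via $\sum_{x,u:u^2=x^2-D}1=\sum_x(1+\legendre{x^2-D}\ell)$ and the fact that the affine conic $u^2-x^2=-D$ has exactly $\ell-1$ points gives the value $-1$. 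This completes the proof.
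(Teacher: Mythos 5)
Your proof is correct. Note that the paper itself offers no proof of this lemma (it is simply labelled ``elementary''), so there is nothing to compare against; your argument — pairing the linear relation $N_1+N_{-1}=\ell-1-\legendre D\ell$ with the character-sum evaluation $\sum_{x\in\F_\ell}\legendre{x^2-D}\ell=-1$, then deducing the two-variable count by splitting off $y=0$ and substituting $x=yt$ for $y\neq 0$ — is the standard way to establish both identities, and all the algebra checks out. Two small remarks, neither of which affects correctness. First, in your ``two-line argument'' for the key evaluation, the substitution $x\mapsto Dx$ gives $\legendre D\ell\sum_x\legendre{Dx^2-1}\ell$, which does not by itself show the sum depends only on the square class of $D$ (you want $x\mapsto cx$ with $D=c^2u$); but this reduction is in fact unnecessary, since your conic count $\sharp\{(x,u):u^2-x^2=-D\}=\ell-1$ (via $(u-x)(u+x)=-D$) is valid for every $D\in\F_\ell^*$, square or not, and immediately yields $\ell+\sum_x\legendre{x^2-D}\ell=\ell-1$. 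Second, the parenthetical sanity check has an off-by-one: the number of pairs with $x^2-Dy^2\neq 0$ is $\ell^2-1-\bigl(1+\legendre D\ell\bigr)(\ell-1)$, not $\ell^2-\bigl(1+\legendre D\ell\bigr)(\ell-1)$; with the corrected count it does match twice the asserted value, as you intended.
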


\subsubsection{Estimates on the number of degree 4 Weil polynomials modulo $\ell$}

\begin{pr}\label{weilfour}
\begin{enumerate}
\item For $\varepsilon\in\{-1,1\}$, we denote by $D_4^\varepsilon$ the number of degree $4$ polynomials of the form 
$X^4+uX^3+vX^2+uqX+q^2\in\F_\ell[X]$ with discriminant $\Delta$ such that $\legendre \Delta \ell=\varepsilon$. Then
$$ D_4^-=\frac 12(\ell-1)\left(\ell-\legendre q\ell\right)\quad \mbox { and  } \quad
D_4^+=\frac 12(\ell-3)\left(\ell-\legendre q\ell\right)+1.$$
\item The number $N_4$ of degree $4$ Weil polynomials with coefficients in $[-(\ell-1)/2,(\ell-1)/2]$ which are irreducible modulo $\ell$ satisfies
\begin{equation}\label{nfour}
 N_4\leq\frac 14 (\ell+1)(\ell-1).
\end{equation}

\item The number $T_4$ of degree $4$ Weil polynomials with coefficients in $[-(\ell-1)/2,(\ell-1)/2]$  with exactly two irreducible factors modulo $\ell$ satisfies
\begin{equation}\label{tfour}
T_4\leq\frac 14(\ell-3)\left(\ell-\legendre q\ell\right)+\frac 18(\ell-1)(\ell+1).
\end{equation}
\end{enumerate}
\noindent Moreover, if $q>1.67\ell^2$, Inequalities (\ref{nfour}) and (\ref{tfour}) are equalities.
\end{pr}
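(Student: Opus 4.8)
The plan is to exploit the functional equation $X^4P_{u,v}(q/X)=q^2P_{u,v}(X)$ enjoyed by every polynomial $P_{u,v}(X)=X^4+uX^3+vX^2+uqX+q^2$. Writing $P_{u,v}(X)=X^2R(X+q/X)$ with $R(W)=W^2+uW+(v-2q)$ and letting $s,t$ be the roots of $R$, we obtain $P_{u,v}(X)=(X^2-sX+q)(X^2-tX+q)$. Applying $\mathrm{disc}(fg)=\mathrm{disc}(f)\,\mathrm{disc}(g)\,\mathrm{Res}(f,g)^2$ to these two quadratic factors (whose resultant equals $\pm q(s-t)^2$), and using $(s-t)^2=u^2-4v+8q$ together with $(s^2-4q)(t^2-4q)=(v+2q)^2-4qu^2$, one gets the key identity
\[
\mathrm{disc}(P_{u,v})=q^2\,(u^2-4v+8q)^2\,\big((v+2q)^2-4qu^2\big).
\]

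For part~(1): since $\ell\nmid q$, for $(u,v)\in\F_\ell^2$ with $u^2-4v+8q\neq 0$ one has $\legendre{\mathrm{disc}(P_{u,v})}{\ell}=\legendre{(v+2q)^2-4qu^2}{\ell}$, while if $u^2-4v+8q=0$ then $\mathrm{disc}(P_{u,v})=0$ and such $(u,v)$ contribute to neither $D_4^+$ nor $D_4^-$. Substituting $w=v+2q$ and applying the second formula of Lemma~\ref{prelimun} with $D=4q$ (so $\legendre{4q}{\ell}=\legendre q\ell$), we get $\#\{(u,w)\in\F_\ell^2:\legendre{w^2-4qu^2}{\ell}=\varepsilon\}=\tfrac12(\ell-1)(\ell-\legendre q\ell)$ for both $\varepsilon=\pm1$. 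It then remains to remove the contribution of the locus $u^2-4v+8q=0$: there $(v+2q)^2-4qu^2=\big((u^2-16q)/4\big)^2$ is a square or zero, so only the $\varepsilon=+1$ count decreases, by $\#\{u\in\F_\ell:u^2\neq16q\}=\ell-1-\legendre q\ell$. This yields $D_4^-=\tfrac12(\ell-1)(\ell-\legendre q\ell)$ and $D_4^+=\tfrac12(\ell-1)(\ell-\legendre q\ell)-(\ell-1-\legendre q\ell)=\tfrac12(\ell-3)(\ell-\legendre q\ell)+1$.

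For parts~(2) and~(3): the involution $\iota\colon x\mapsto q/x$ permutes the roots of $P_{u,v}$, and in the $W$-parametrisation every quadratic factor has constant term exactly $q$, so a factor $X^2-q$ (which would make the constant term of $P_{u,v}$ equal to $-q^2$) cannot occur. A short analysis of $\iota$ then shows, for $P_{u,v}$ read modulo $\ell$: (i) the factorisation type (linear)$\times$(irreducible cubic) is impossible, since $\iota$ would be forced to fix a root of the cubic, placing it in $\F_\ell$; (ii) a polynomial of type $(1,1,2)$ (resp.\ of type $(1,1,1,1)$) is necessarily of the form $(X^2-sX+q)(X^2-tX+q)$ with $s\neq t$ in $\F_\ell$ and $\big\{\legendre{s^2-4q}{\ell},\legendre{t^2-4q}{\ell}\big\}$ equal to $\{1,-1\}$ (resp.\ to $\{1,1\}$), and conversely. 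By Lemma~\ref{prelimun}, the numbers $n_\pm:=\#\{x\in\F_\ell:\legendre{x^2-4q}{\ell}=\pm1\}$ equal $\tfrac12(\ell-2-\legendre q\ell)$ and $\tfrac12(\ell-\legendre q\ell)$, so over $\F_\ell$ these two types are counted by $n_+n_-$ and $\binom{n_+}{2}$ respectively. By Stickelberger's congruence~\eqref{eq:stickelberger} with $n=4$, a separable polynomial has square discriminant iff it has an even number of irreducible factors; since polynomials of this shape realise only the types $(4)$, $(1,1,2)$, $(2,2)$, $(1,1,1,1)$, we get over $\F_\ell$ that $D_4^-$ equals the number of irreducible ones plus $n_+n_-$, that $D_4^+$ equals the number of $(2,2)$-type ones plus $\binom{n_+}{2}$, and that an $\F_\ell$-polynomial of this shape has exactly two irreducible factors precisely when it is of type $(2,2)$. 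Hence $N_4\leq D_4^- - n_+n_-$ and $T_4\leq D_4^+ - \binom{n_+}{2}$, and a routine simplification using part~(1) turns these into $\tfrac14(\ell+1)(\ell-1)$ and $\tfrac14(\ell-3)(\ell-\legendre q\ell)+\tfrac18(\ell-1)(\ell+1)$. Both bounds become equalities once $q>1.67\ell^2$, because reduction modulo $\ell$ is injective on polynomials with coefficients in $[-(\ell-1)/2,(\ell-1)/2]$ and, by Proposition~\ref{boundqweil}(1), every such lift of an $\F_\ell$-polynomial of this shape is then a Weil polynomial.

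The main obstacle will be step~(ii): classifying exactly which factorisation types the ``$q$-symmetric'' polynomials $P_{u,v}$ can realise, and excluding the spurious ones — the type (linear)$\times$(irreducible cubic), and any occurrence of the factor $X^2-q$. Once that combinatorial and Galois-theoretic bookkeeping is settled, the discriminant identity and Stickelberger's congruence reduce the whole statement to the elementary point counts of Lemma~\ref{prelimun}.
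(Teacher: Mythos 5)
Your proposal is correct and takes essentially the same route as the paper: the same discriminant factorisation $\Delta=q^2\kappa^2\delta$ with $\kappa=-(u^2-4v+8q)$, $\delta=(v+2q)^2-4qu^2$ combined with Lemma~\ref{prelimun} for part (1), Stickelberger's parity criterion \eqref{eq:stickelberger} together with subtraction of the $(1,1,2)$- and $(1,1,1,1)$-type counts (your $n_+n_-$ and $\binom{n_+}{2}$ are exactly the paper's $\frac14(\ell-2-\legendre q\ell)(\ell-\legendre q\ell)$ and $\frac18(\ell-\legendre q\ell-2)(\ell-\legendre q\ell-4)$) for parts (2)--(3), and Proposition~\ref{boundqweil} plus injectivity of reduction on the coefficient box for the equality statement. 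The differences are presentational — you derive the discriminant identity from the factorisation $(X^2-sX+q)(X^2-tX+q)$ and make explicit, via the involution $x\mapsto q/x$, the classification of factorisation types that the paper uses implicitly — and the point you flag as the main obstacle is indeed easily settled, since a root $\alpha$ with $\alpha^2=q$ forces $(v+2q)^2=4qu^2$, i.e.\ $\delta=0$, so a separable polynomial of this shape has no $\iota$-fixed root, which rules out both a factor $X^2-q$ and the (linear)$\times$(cubic) type.
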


\begin{proof}

\begin{enumerate}
\item First, we compute $D_4^{\varepsilon}$.
  The polynomial  $Q(X)=X^4+uX^3+vX^2+uqX+q^2$ has discriminant
$$\Delta = q^2 \kappa ^2 \delta
\quad \mbox{ where}\quad \kappa= -u^2 +4(v-2q) \quad\mbox{ and }\quad\delta =(v+2q)^2-4qu^2.$$

Since $q\in \F_\ell^*$,  we have $\legendre{\Delta}{\ell}=\legendre\kappa\ell^2\legendre\delta\ell$.
Moreover, notice that if $\kappa=0$ then  $\delta=(v-6q)^2$.

It follows that
$$D_4^- =\sharp\left\{(u,v)\in\F_\ell^2; \legendre{\delta}{\ell}=-1\right\}$$
and
$$D_4^+ =\sharp\left\{(u,v)\in\F_\ell^2; \legendre{\delta}{\ell}=1\right\}-\sharp\left\{(u,v)\in\F_\ell^2; v\neq 6q \mbox{ and } u^2=4(v-2q)\right\}.$$

Since the map $(u,v)\mapsto (v+2q,2u)$ is a bijection on $\F_\ell^2$ (because $\ell\neq 2$), by Lemma~\ref{prelimun} we have
$$\sharp\left\{(u,v)\in\F_\ell^2; \legendre{\delta}\ell =\varepsilon\right\}=
\sharp\left\{(x,y)\in\F_\ell^2; \legendre{x^2-qy^2}\ell=\varepsilon\right\}=\frac{(\ell-1)}2\left(\ell-\legendre q\ell\right)$$
for any $\varepsilon\in\{\pm 1\}.$ This gives the result for $D_4^-$. The result for $D_4^+$ follows from: 
\begin{eqnarray*}
\sharp\left\{(u,v); \; v\neq 6q \mbox{ and } u^2=4(v-2q)\right\}&=&
\sharp\left\{(u,v);\;  u^2=4(v-2q)\right\}-\sharp\{u\in\F_\ell; u^2=16q\}\\
&=&\ell-1-\legendre q\ell.
\end{eqnarray*}

\item Next, we bound the quantity $N_4$.   
By Stickelberger's result (see \eqref{eq:stickelberger}), 
 a  monic degree $4$ polynomial in $\Z[X]$ has non-square discriminant modulo $\ell$ if and only if  it has one or three distinct 
irreducible factors in $\F_\ell[X]$.   
In the latter case, 
the polynomial has the form $$(X-\alpha')(X-q/\alpha')(X^2-B'X+q)$$ with $X^2-B'X+q$ irreducible in $\F_\ell[X]$ and $\alpha'\neq q/\alpha'$ in $\F_\ell^*$. By Lemma~\ref{prelimun}, there are
$$\frac 14\left(\ell-2-\legendre{q}{\ell}\right) \left(\ell-\legendre q\ell\right)$$
such polynomials with three irreducible factors.
It follows that
\[
N_4\leq D_4^- - \frac 14\left(\ell-2-\legendre{q}{\ell}\right) \left(\ell-\legendre q\ell\right)\leq\frac 14(\ell-1)(\ell+1).
\]

\item Finally, we bound the quantity $T_4$. As 
above, Stickelberger's result implies that a degree $4$ Weil polynomial $Q(X)$ in $\Z[X]$ has exactly 
two distinct irreducible factors modulo $\ell$ if and only if $\legendre {\Delta_Q}{\ell}=1$  and  $Q(X) \pmod \ell$ does not have four distinct roots in $\F_\ell$.
By Lemma~\ref{prelimun}, there are
$$\frac 18\left(\ell-\legendre q\ell-2\right)\left(\ell-\legendre q\ell-4\right)$$
Weil polynomials  with coefficients in $[-(\ell-1)/2,(\ell-1)/2]$ whose reduction modulo $\ell$ has  
four distinct roots in $\F_\ell$.
It follows that
\begin{eqnarray*}
T_4&\leq&D_4^+ - \frac 18 \left(\ell-\legendre q\ell-2\right)\left(\ell-\legendre q\ell-4\right)\\
&\leq& \frac 14(\ell-3)\left(\ell-\legendre q\ell\right)+\frac 18(\ell-1)(\ell+1).
\end{eqnarray*}
\end{enumerate}
When $q>1.67\ell^2$, these upper bounds for $N_4$ and $T_4$ are equalities, since in this case, by Proposition~\ref{boundqweil},  every polynomial of the form $X^4+uX^3+vX^2+uqX+q^2$ with $|u|,|v|<\ell$  is a Weil polynomial.
\end{proof}

\subsubsection{Proof of Lemma~\ref{redsix}}

Recall that $R_6$ denotes the number of Weil polynomials $P_q(X)=X^6+aX^5+bX^4+cX^3+qbX^2+q^2aX+q^3$ 
with coefficients in $[-(\ell-1)/2,(\ell-1)/2]$, $a,c\neq 0$,  non-square discriminant modulo $\ell$ and which are reducible modulo $\ell$. We may drop  the  conditions $a\neq 0, c\neq 0$ to bound $R_6$. 

By Stickelberger's result (see \eqref{eq:stickelberger}), a monic degree $6$ polynomial in $\Z[X]$ with non-square discriminant modulo $\ell$  
has $1,\ 3$ or $5$ distinct irreducible factors in $\F_\ell[X]$. 
Hence, the factorisation in $\F_{\ell}[X]$ of  a polynomial  $P_q(X)$  as above is of one of the following types (note that a root $\alpha$ of $P_q(X)$ 
in $\overline{\F}_\ell$ is in $\F_\ell$ if and only $q/\alpha$  is also in $\F_\ell$):
\begin{enumerate}
\item $P_q (X)\equiv (X-\alpha)(X-\frac q\alpha)(X-\beta)(X-\frac q\beta)(X^2-CX+q)$, with $C^2-4q$ non-square modulo $\ell$ and $\alpha\neq q/\alpha$, $\beta\neq q/\beta$ and $\{\alpha,q/\alpha\}\neq\{\beta,q/\beta\}$; equivalently $P_q(X)\equiv (X^2-AX+q)(X^2-BX+q)(X^2-CX+q)$ where the first two quadratic polynomials are distinct and both reducible and the third one is irreducible;\item $P_q (X)\equiv (X-\alpha)(X-\frac q\alpha) Q(X)$, where $\alpha\neq q/\alpha$ and the irreducible factor  $Q(X)$ is the reduction of a  degree $4$ Weil polynomial;
\item  $P_q(X)$ is the product of three distinct irreducible quadratic polynomials, i.e., $P_q(X) \equiv (X^2-CX+q)Q(X)$ where $X^2-CX+q$ is irreducible and $Q(X)$ is the reduction of a degree $4$ Weil polynomial which has two distinct  irreducible factors, both of which are distinct from $X^2-CX+q$.
\end{enumerate}

We will count the number of polynomials of each type.

\noindent\textbf{Type 1.}
By Lemma~\ref{prelimun}, there are $\frac 12\left(\ell-\legendre q\ell\right)$ 
irreducible quadratic polynomials $X^2-CX+q$.
Also by Lemma~\ref{prelimun}, there are $\frac 12\left(\ell-2-\legendre q\ell\right)$ 
choices for reducible $X^2-AX+q$ without 
a double root and then there are $\frac 12\left(\ell-2-\legendre q\ell\right)-1$ choices 
for reducible $X^2-BX+q$ without a double root and distinct from $X^2-AX+q$.
It follows that there are
$ \frac 1{16}\left(\ell-\legendre q\ell\right)\left(\ell-\legendre q\ell-2\right)\left(\ell-\legendre q\ell -4\right) $
such polynomials.

\noindent\textbf{Type 2.}
By Proposition~\ref{weilfour} and Lemma~\ref{prelimun}, the number of polynomials with decomposition of this type is
$$\frac 12\left(\ell-\legendre q\ell-2\right)N_4 \leq \frac 18 (\ell+1)(\ell-1)\left(\ell-\legendre q\ell-2\right).$$

\noindent\textbf{Type 3.} Proposition~\ref{weilfour} and Lemma~\ref{prelimun} imply that there are
$$
\leq \frac 12\left(\ell-\legendre q\ell\right) T_4  \leq \frac 18\left(\ell-\legendre q\ell\right)^2(\ell-3)+\frac 1{16}(\ell-1)(\ell+1)\left(\ell-\legendre q\ell\right)
$$
polynomials of this type. \footnote{The first inequality is due to the fact that we do not take into account that $X^2-CX+q$ has to be distinct from the  factors of $Q(X)$.}

Summing these three upper bounds yields the lemma. \hfill$\qed$

\subsubsection{Proof of Lemma~\ref{nnsquaresix}}

The discriminant   of $P_q(X)$  is
$\Delta_{P_q}= q^6\Gamma^2\delta $,
where
 $$\Gamma= 8q a^4 + 9q^2a^2 - 42qa^2 b + a^2b^2 - 4a^3c + 108q^3 -
108q^2b + 36qb^2 - 4b^3 + 54qac + 18abc - 27c^2$$
and

$\delta=(c+2aq)^2-4q(b+q)^2.$  Hence, we have
 \begin{eqnarray*}
 D_6^{*-}&=&\sharp\left\{(a,b,c); a,c\neq 0, \Gamma\not\equiv 0\bmod \ell \mbox{ and } \legendre \delta\ell=-1\right\}\\
 &=&\sharp\left\{(a,b,c); a,c\neq 0, \legendre \delta\ell=-1\right\}
-\sharp\left\{(a,b,c); a,c\neq0, \Gamma\equiv 0\bmod \ell \mbox{ and } \legendre \delta\ell=-1\right\}\\
&\geq&M-W,
 \end{eqnarray*}
 where
$ M = \sharp\left\{(a,b,c); a,c\neq 0, \legendre \delta\ell=-1\right\}$ and $W =  \sharp\left\{(a,b,c); a\neq0, \Gamma\equiv 0\bmod \ell \right\}$.

 \paragraph{Computation of $M$.}
 Since $\ell >2$ and $q\in\F_\ell^*$, for any fixed $c\in\F_\ell^\times$, the map $(a,b)\mapsto (c+2aq,b+q)$ is a bijection from $\F_\ell^*\times\F_\ell$ to $\F_\ell\backslash\{c\}\times\F_\ell$.
 From this and Lemma~\ref{prelimun} we deduce that
 \begin{eqnarray*}
 M&=& \sum_{c\in\F_\ell^*} \sharp\left\{(x,y)\in\F_\ell^2; x\neq c,\; \legendre{x^2-4qy^2}{\ell}=-1\right\}\\
 &=& \sum_{c\in\F_\ell^*} \sharp\left\{(x,y)\in\F_\ell^2; \legendre{x^2-4qy^2}{\ell}=-1\right\} - \sum_{c\in\F_\ell^*} \sharp\left\{y\in\F_\ell ; \legendre{c^2-4qy^2}{\ell}=-1\right\}\\
 &=& \frac 12(\ell-1)^2\left(\ell-\legendre q\ell\right) - \sum_{c\in\F_\ell^*} M'_c,
 \end{eqnarray*}
where
\begin{eqnarray*}
 M'_c &=& \sharp\left\{y\in\F_\ell ; \legendre{c^2-4qy^2}{\ell}=-1\right\}\\
 &=&\sharp\left\{y\in\F_\ell ; \legendre{y^2-(c^2/4q)}{\ell}=-\legendre{-q}\ell\right\} \\
 &=& \frac 12\left(\ell-1-\legendre q\ell+\legendre{-q}\ell\right), 
\end{eqnarray*}
the last equality following from Lemma~\ref{prelimun}. 
This gives 
$$
M 
= \frac 12 (\ell-1)^2\left(\ell-1-\legendre q\ell\right) +\frac 12 (\ell-1)\legendre q\ell\left(1-\legendre{-1}\ell\right).
$$

\paragraph{Computation of $W=\sharp\left\{(a,b,c)\in\F_\ell^3; a\neq0, \Gamma=0 \right\}.$}

The discriminant of $\Gamma$ viewed as a\hfil \newline quadratic polynomial\footnote{More precisely, we have 
 $ \Gamma=-27c^2+G_1 c+G_0,\ (G_0,G_1\in \F_\ell[a,b])$ with 
 $G_1(a,b)=-2a (2a^2 - 27q -9b) $ 
and $G_0(a,b)=8qa^4 + 9q^2a^2 - 42qa^2b + a^2b^2 + 108q^3 - 108q^2b
+ 36qb^2 - 4b^3.$}
 in $c$ is  $\gamma=16(a^2-3(b-3q))^3.$ 
It follows that
\begin{eqnarray*}
W&=& 2\cdot\sharp\left\{(a,b)\in\F_\ell^2; a\neq 0, \legendre \gamma\ell=1\right\}+
\sharp\left\{(a,b)\in\F_\ell^2; a\neq 0, \gamma=0 \right\}\\
&=& 2\cdot\sharp\left\{(a,b)\in\F_\ell^2; a\neq 0, \legendre {a^2-3(b-3q)}\ell=1\right\}+
\sharp\left\{(a,b)\in\F_\ell^2; a\neq 0, a^2=3(b-3q) \right\}.
\end{eqnarray*}
Moreover, since  $\ell>3$, the map $b\mapsto 3(b-3q)$ is a bijection on $\F_\ell$.  So we have
\begin{eqnarray*}
W &=& 2\cdot\sharp\left\{(x,y)\in\F_\ell^2; x\neq 0, \legendre {x^2-y}\ell=1\right\}+
\sharp\left\{(x,y)\in\F_\ell^2; x\neq 0, x^2=y \right\}\\
&=& 2\cdot\sum_{y\in\F_\ell}\sharp\left\{x\in\F_\ell; \legendre{x^2-y}\ell=1\right\}-2\cdot\sharp\left\{y\in\F_\ell; \legendre{-y}{\ell}=1\right\}+\sum_{y\in\F_\ell^*}\sharp\{x\in\F_\ell^*; x^2=y\} \\
&=& \sum_{y\in\F_\ell^*}\left(\ell-2-\legendre y\ell\right)+2(\ell-1) - (\ell-1) + (\ell-1),
\end{eqnarray*}
using Lemma~\ref{prelimun}  (the second term is the contribution of $y=0$).
This yields $W=\ell(\ell-1)$ and computing $M-W$ concludes the proof. \qed

\subsection{Examples}\label{ex:section5}

This section contains examples of Weil polynomials satisfying the conditions in Proposition \ref{irredmodell}. They were
obtained using \textsc{Sage}.
\begin{itemize}
\item $\ell=3$, $q=19$: $P_q(X)=X^6 + X^5 + X^3 + 361X + 6859$;
\item $\ell=5$, $q=47$: $P_q(X)=X^6 + X^5 + X^4 + X^3 + 47X^2 + 2209X + 103823$;
\item $\ell=7$, $q=97$: $P_q(X)=X^6 + X^5 + 3X^3 + 9409X + 912673$;
\item $\ell=11$, $q=223$: $P_q(X)=X^6 + X^5 + 5X^3 + 49729X + 11089567$;
\item $\ell=13$:
\begin{itemize}
\item[] $q=311$: $P_q(X)=X^6 + X^5 + 3X^3 + 96721X + 30080231$;
\item[] $q=313$: $P_q(X)=X^6 + X^5 + 4X^3 + 97969X + 30664297$;
\item[] $q=317$: $P_q(X)=X^6 + X^5 + X^3 + 100489X + 31855013$;
\item[] $q=331$: $P_q(X)=X^6 + X^5 + 3X^3 + 109561X + 36264691$.
\end{itemize}
\end{itemize}

\bibliographystyle{plain}
\bibliography{Bibliography_Project2}

\end{document}